\newtheorem{theo}{Theorem}[section]
\newtheorem{lema}[theo]{Lemma}
\newtheorem{prop}[theo]{Proposition}
\theoremstyle{definition}
\newtheorem{defi}[theo]{Definition}
\def\Har{\operatorname{Har}}
\def\Harhat{\operatorname{\widehat Har}}
\def\H{\mathbb{H}}
\def\C{\mathbb{C}}
\def\M{\mathcal{M}}
\def\N{\mathcal{N}}
\def\Sc{\operatorname{Sc}}
\def\Vec{\operatorname{Vec}}
\def\Dbar{\overline{\partial}}
\def\Im{\operatorname{Im}}
\def\Re{\operatorname{Re}}
\def\s{\raisebox{-1.3ex}{\rule{0pt}{4ex}}}
\def\st{\raisebox{-1.3ex}{\rule{0pt}{5ex}}}
\newcommand \R      {\mathbb R}
\newcommand \be     {\begin{equation}}
\newcommand \ee     {\end{equation}}
\newcommand{\fun}[3]{#1_{#2}^{#3}}
\newcommand{\funhat}[3]{\fun{\widehat{#1}}{#2}{#3}}
\newcommand{\funbar}[3]{\fun{\overline{#1}}{#2}{#3}}
\newcommand{\harorigq}[1]{\funhat{U}{#1}{}}      
\newcommand{\harorig}[2]{\fun{U}{#1}{#2}}   
\newcommand{\harbaseq}[1]{\funhat{V}{#1}{}} 
\newcommand{\harbase}[2]{\fun{V}{#1}{#2}}   
\newcommand{\monog}[2]{\fun{X}{#1}{#2}}
\newcommand{\antimonog}[2]{\funbar{X}{#1}{#2}}
\newcommand{\ambi}[2]{Y_{#1}^{#2}}
\newcommand{\contra}[2]{\fun{Z}{#1}{#2}}
\newcommand{\coefi}[1]{\alpha_{#1}}
\newcommand{\coefii}[1]{\beta_{#1}}
\newcommand{\coefiii}[1]{\gamma_{#1}}
\newcommand{\coefiv}[1]{a_{#1}}
\newcommand{\coefv}[2]{b_{#1}^{#2}}
\DeclareMathOperator{\arcsinh}{arcsinh}
\journal{}
\begin{document}

\begin{frontmatter}



\title{Contragenic Functions on Spheroidal Domains}


\author{R. Garc{\'i}a Ancona$^{a}$, J. Morais$^{b}$ and R. Michael Porter$^{a}$}

\address{$^{a}$Department of Mathematics, CINVESTAV-Quer\'etaro,
  Apdo.\ Postal 1-798, Arteaga 5, 76000 Santiago de Quer\'etaro, Qro.,
  Mexico. \\
$^{b}$Department of Mathematics, ITAM, Rio Hondo \#1, Col. Progreso Tizap\'an, M\'exico, DF 01080, M\'exico. E-mail: joao.morais@itam.mx}


\begin{abstract}
We construct bases of polynomials for the spaces of
  square-integra\-ble harmonic functions which are orthogonal to the
  monogenic and antimonogenic $\R^3$-valued functions defined in a
  prolate or oblate spheroid.
\end{abstract}

\begin{keyword}
quaternionic analysis; monogenic function; hyperholomorphic
  function; contragenic function; spheroidal harmonics.
\end{keyword}

\end{frontmatter}


\section{Introduction}

The theory of holomorphic functions in Clifford algebras, and in particular quaternions, is quite extensive \cite{Brackx,Gurlebeck2,Sudbery}. In recent years the corresponding
results for quaternionic-differentiable functions defined in domains of $\R^3$ have been developed \cite{Bock2,Cacao1,Cacao2,Cacao3,Cacao4,Delanghe2009,Leutwiler,Morais7,Morais8,MoPe,MoK,Cuiming} with a view to making this theory applicable to physical systems. In particular, a function from $\R^3$ to $\R^3$ is quaternion-holomorphic (monogenic) precisely when it satisfies the Riesz system of partial differential equations \cite{Delanghe2007,Morais,Morais3,Morais4,MoraisAG}.

Spheres are commonly used as the reference domain for modeling physical problems.  However, in many cases, a spheroidal domain may offer a better approximation to reality. Here we study the natural
basis of harmonic polynomials in $\Har_2(\Omega)=L_2(\Omega) \cap \Har(\Omega)$ where $\Omega$ is a prolate or oblate spheroid. Our attention is directed to what are now known as contragenic functions, which are orthogonal to all monogenic and antimonogenic $L_2$ harmonic functions.  Previous to \cite{Alvarez} the existence of such functions was not suspected.  It is necessary to understand the contragenic functions in order to be able to consider the ``monogenic part'' of a given harmonic function. Contragenicity, in contrast to harmonicity and monogenicity, is not a local property, since it depends on the domain under consideration.

In Section \ref{sec:harmonics} the spheroidal harmonics are defined following \cite{Garabedian}, with a rescaling factor which permits including the sphere as a limit of both the prolate and oblate
cases, combined into a single one-parameter family.  The spheroidal monogenic polynomials are calculated in Section \ref{sec:monogenics}, and new explicit formulas for their nonscalar parts are obtained in
terms of the spheroidal harmonics. A basis for the space of functions obtained by summing a monogenic function with an antimonogenic function is written out.  All of these orthogonal bases are composed
of elements parametrized by the shape of the corresponding spheroid, and their norms are calculated explicitly.  In the final section we produce an orthogonal basis for the contragenic spheroidal
polynomials.


\section{Spheroidal harmonics \label{sec:harmonics} }

Analysis of harmonic and monogenic functions on spheroids (cf.\ \cite{Garabedian,Hobson,Morais2,Morais5,Morais6,MoPeK,MoNgK,Nguyen}
typically separates the prolate and oblate cases, parametrized in
their respective confocal families
\[ \{x\in\R^3 |\  \frac{x_0^2}{\cosh^2\alpha} + \frac{x_1^2 +
x_2^2}{\sinh^2\alpha} < 1\}, \]
\[    \{x\in\R^3 |\   \frac{x_0^2}{\sinh^2\alpha} + \frac{x_1^2 +
x_2^2}{\cosh^2\alpha} < 1\},
\]
for $\alpha>0$. These domains do not include the case of a Euclidean
ball (where harmonic analysis originated), but they become rounder as
they degenerate with $\alpha\to\infty$.  Thus we prefer to combine
them into a single family
\begin{equation}\label{eq:Omegamu}
  \Omega_\mu =
  \{x \in \R^3 |\  x_0^2 + \frac{x_1^2 + x_2^2}{e^{2\nu}} < 1 \},
\end{equation}
where $\nu\in\R$ is arbitrary, and $\mu=(1-e^{2\nu})^{\frac{1}{2}}$ by
convention is in the interval $(0,1)$ when $\nu<0$ (prolate spheroid), and in
$i\R^+$ when $\nu>0$ (oblate spheroid); the intermediate value
$\nu=0$, $\mu=0$ gives the unit ball $\Omega_0=\{x\colon\ |x|^2<1\}$.
The convenience of the parameter $\mu$ will become evident later.
Here we note that in the prolate case, we obtain $\Omega_\mu$ by
setting $e^\nu=\tanh\alpha$ and rescaling $x$ by a factor of
$\mu^{-1}$, while for the oblate case, we set $e^\nu=\coth\alpha$ and
rescale by a factor of $(\mu/i)^{-1}$.

The spheroidal harmonics $\harorig{n,m}{\pm}[\mu]$ of \cite{Garabedian}
are constructed as follows.  Suppose first that $\nu<0$. For this case
we use coordinates $(u ,v,\phi)$ defined on the prolate spheroid by
\begin{equation}\label{eq:prolatecoords}
  x_0 = \mu \cos u \cosh v, \quad x_1 = \mu \sin u \sinh v \cos \phi, \quad
  x_2 = \mu \sin u \sinh v \sin \phi,
\end{equation}
where $u\in[0,\pi]$, $v\in[0,\mbox{arctanh}\,e^\nu]$,
$\phi\in[0,2\pi]$, and $\mu>0$. Then for $x\in\Omega_\mu$ we define
\begin{align} \label{eq:prolatepreharmonics}
  \harorig{n,m}{\pm}[\mu](x)
  =& \, \coefi{n,m}\mu^n
  P_{n}^{m}(\cos u) P_{n}^{m}(\cosh v) \Phi^{\pm}_m(\phi),
\end{align}
 where \begin{equation}\label{eq:rescale}
\coefi{n,m} = \frac{(n-m)!} {(2n-1)!!}
\end{equation}
(with use of the symbol $n!!=\prod_{k=0}^{\lceil n/2 \rceil-1} (n-2k)$
for the double factorial) and we have written
\begin{equation}\label{eq:defPhi}
\Phi_m^+(\phi)=\cos (m\phi), \quad \Phi_m^-(\phi)=\sin (m\phi)
\end{equation}
in order to unify the notation for the odd and even functions. Here $P_n^m$
denotes the associated Legendre function of the first kind \cite[Ch. III]{Hobson}, of degree
$n$ and order $m$.

Observe that $\cos u = 2x_0/\omega$ and $\cosh v = \omega/(2\mu)$,
where
\begin{equation}\label{eq:omega}
  \omega=\sqrt{(x_0+\mu)^2+x_1^2+x_2^2}+\sqrt{(x_0-\mu)^2+x_1^2+x_2^2}
\end{equation}
is positive.  The oblate case $\nu>0$ is obtained from
this by analytic continuation, thinking of $\mu\in i\R^+$ as being
boundary values of the first quadrant in complex plane.  The terms
$\zeta=(|x|^2+\mu^2)+2x_0\mu$ and $\overline{\zeta}$ inside the
radicals in \eqref{eq:omega} are now complex conjugates, so
$\omega=\sqrt{2(|\zeta|+\Re\zeta)}$ is real and slightly less than
$2|x|$ for $\mu/i$ small. Then
\[ \frac{2x_0}{\omega} = x_0\sqrt{\frac{2}{|\zeta|+\Re\zeta}} , \quad
 \frac{\omega}{2\mu} = i\,\sqrt{\frac{|\zeta|+\Re\zeta}{2(e^{2\nu}-1)}},
\]
and one can verify from this that $|2x_0/\omega|\leq1$ and that
$\Im(\omega/2\mu)$ takes values in $[0,\infty)$.

Consequently, for an oblate spheroid we replace the coordinate $v$ with
the value $\arcsinh \hspace{0.01cm} \cosh v$ in order to retain  formula
\eqref{eq:prolatecoords} via the relations $2x_0/\omega=\,\cos u$ and
$\omega/(2\mu)=\,i\sinh v$, with $u\,\in\,[0,\pi]$ and
$v\in[0,\mbox{arccoth}\,e^\nu]$, and use
\eqref{eq:prolatepreharmonics} again to define the oblate harmonics.
This completes the construction of the spheroidal harmonics.

Note that $\harorig{n,0}{-}[\mu]$ vanishes identically, as do all
$\harorig{n,m}{\pm}[\mu]$ for $m\ge n+1$. Therefore when we refer to
the set $\{\harorig{n,m}{\pm}[\mu]\}$ we always exclude the indices
which refer to these trivial cases, even when we do not state
explicitly $0\le m\le n$ for the ``$+$'' case and $1\le m \le n$ for
the ``$-$'' case.  In general the polynomials
$\harorig{n,m}{\pm}[\mu]$ are not homogeneous, unlike the classical spherical harmonics \cite{Muller}
\begin{equation} \label{eq:sphericalharmonics}
  \harorig{n,m}{\pm}[0] = |x|^n P_n^m\left(\frac{x_0}{|x|}\right)
     \Phi^{\pm}_m(\phi)
 \end{equation}
where $x_0=r\cos\theta$, $x_1=r\sin\theta\cos\phi$,
$x_2=r\sin\theta\sin\phi$.

The family $\{\harorig{n,m}{\pm}[\mu]\}$ turns out to be orthogonal
with respect to the Dirichlet inner product \cite{Garabedian}, but
not in $L_2(\Omega_\mu)$. Define
\begin{equation}  \label{eq:prolateharmonics}
  \harbase{n,m}{\pm}[\mu](x) =
  \frac{\partial}{\partial x_0} \harorig{n+1,m}{\pm}[\mu](x) .
\end{equation}
Since the functions \eqref{eq:prolateharmonics}, except for the
constant factors $\coefi{n,m}$ and the rescaling of the $x$
variable, are the functions defined in \cite{Garabedian}, the main
result of that paper can be restated as follows.

\begin{theo} \label{propgarabedian} The functions
  $\harbase{n,m}{\pm}[\mu]$ ($n\geq0$) are harmonic polynomials in
  $x_0,x_1,x_2$ of degree $n$. They form a complete orthogonal family
  in the closed subspace $L_2(\Omega_\mu)\cap \Har(\Omega_\mu)$ of
  $L_2(\Omega_\mu)$. Furthermore,
\begin{equation}
    \|\harbase{n,m}{\pm}[\mu]\|_2^2 =
   (1+\delta_{0,m}) \mu^{2n+3} \coefii{n,m} 
  \int_{1}^{\frac{1}{\mu}}P_{n}^{m}(t)P_{n+2}^{m}(t)\,dt,\label{e:harqnorms}
\end{equation}
where $\delta_{m,m'}$ is the Kronecker symbol, and
\begin{equation}\label{coeficientnorm}
 \coefii{n,m}  =  \frac{\pi \, 2^{n+1} (n+m+1)(n-m+2)! (n+m+1)!}
     {(2n+1)!! (2n+3)!!}.
\end{equation}
\end{theo}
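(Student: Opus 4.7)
The statement breaks into three parts: (i) that each $\harbase{n,m}{\pm}[\mu]$ is a harmonic polynomial of degree $n$; (ii) orthogonality and completeness in $L_2(\Omega_\mu)\cap\Har(\Omega_\mu)$; and (iii) the explicit norm formula (\ref{e:harqnorms}). Part (i) reduces to a verification: $\harorig{n+1,m}{\pm}[\mu]$ satisfies $\Delta U=0$ because (\ref{eq:prolatepreharmonics}) is the classical separated-variable solution to Laplace's equation in prolate spheroidal coordinates, with the oblate case following by analytic continuation in $\mu$ as explained in the text. Moreover it is a polynomial of degree $n+1$ in $(x_0,x_1,x_2)$, since the half-integer powers $(1-\cos^2 u)^{m/2}$ and $(\cosh^2 v-1)^{m/2}$ implicit in $P_{n+1}^m$ recombine with the azimuthal factor $\Phi_m^\pm(\phi)$ of (\ref{eq:defPhi}) to produce an honest Cartesian polynomial, the scalar $\coefi{n+1,m}\mu^{n+1}$ being arranged precisely to clear denominators and yield integer degree. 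Harmonicity and the polynomial character are both preserved by $\partial/\partial x_0$, with the degree dropping by one.

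Part (ii) is essentially the main theorem of \cite{Garabedian}, reinterpreted in our normalization. The rescaling $x\mapsto\mu^{-1}x$ (respectively $(\mu/i)^{-1}x$) is an invertible affine change of variables preserving the Laplace operator up to a constant, and the scalar factor $\coefi{n,m}$ has no effect on orthogonality; completeness of $\{\harbase{n,m}{\pm}[\mu]\}$ in the closed subspace $L_2(\Omega_\mu)\cap\Har(\Omega_\mu)$ then transfers from Garabedian verbatim. A dimension count confirms that the members of degree $\le N$ span all harmonic polynomials of that degree on $\Omega_\mu$, which, combined with the density of harmonic polynomials in $L_2(\Omega_\mu)\cap\Har(\Omega_\mu)$, provides an alternative route to completeness.

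For part (iii), I would pass to prolate spheroidal coordinates (\ref{eq:prolatecoords}), whose Jacobian is $\mu^3(\sinh^2 v+\sin^2 u)\sin u\sinh v$. Applying the chain rule to express $\partial/\partial x_0$ as a combination of $\partial_u$ and $\partial_v$, squaring, and absorbing the Jacobian, produces an integrand that is a linear combination of products $P_j^m(\cos u)P_l^m(\cosh v)\Phi_m^\pm(\phi)^2$. The $\phi$ integration contributes $\pi(1+\delta_{0,m})$. Using the Legendre contiguous relation
\[
(2k+1)\,t\,P_k^m(t) = (k-m+1)P_{k+1}^m(t)+(k+m)P_{k-1}^m(t),
\]
together with the analogous identity for $\frac{d}{dt}P_k^m$, one rewrites the $u$-integrand as a sum of products $P_j^m(\cos u)P_{j'}^m(\cos u)$. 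Under $t=\cos u$, the classical orthogonality $\int_{-1}^1 P_j^m P_{j'}^m\,dt\propto\delta_{j,j'}$ collapses this sum to a single surviving term, forcing the $v$-integral (after $t=\cosh v$, with upper endpoint $\cosh(\mbox{arctanh}\,e^\nu)=1/\mu$) to be exactly $\int_1^{1/\mu}P_n^m(t)P_{n+2}^m(t)\,dt$. Assembling the powers of $\mu$, the factor $\coefi{n+1,m}^2$, and the Legendre normalization constant $2(n+1+m)!/[(2n+3)(n+1-m)!]$ yields the coefficient $\coefii{n,m}$ of (\ref{coeficientnorm}).

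The main obstacle is precisely the algebraic bookkeeping in part (iii): the chain-rule derivatives, the spheroidal metric factor $\sin^2 u+\sinh^2 v$, and the contiguous identities for the $P_k^m$ must collaborate so as to leave a single surviving $u$-index and the mixed product $P_n^m P_{n+2}^m$ rather than $(P_{n+1}^m)^2$, while the proliferation of constants must simplify cleanly to $\coefii{n,m}$. A useful sanity check is the limit $\mu\to 0$, in which (\ref{e:harqnorms}) must reduce to the familiar $L_2$-norms of the solid spherical harmonics (\ref{eq:sphericalharmonics}); this limit both guides and verifies the calculation, and helps pin down the normalization constants as they arise.
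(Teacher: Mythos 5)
Two remarks before the substance. The paper itself offers no proof of this theorem: since the $\harbase{n,m}{\pm}[\mu]$ differ from Garabedian's polynomials only by the constant factors $\coefi{n+1,m}$ and the rescaling of the $x$ variable, Theorem \ref{propgarabedian} --- including the norm formula \eqref{e:harqnorms} --- is imported from \cite{Garabedian} with that bookkeeping. Your parts (i) and (ii) therefore coincide with the paper's route and are unobjectionable (the extra ``dimension count plus density'' remark would itself need a citation, but it is redundant). The difficulty is part (iii), where you propose an independent derivation of \eqref{e:harqnorms}, and there your argument has a genuine gap.

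Concretely: in the coordinates \eqref{eq:prolatecoords} one has $\partial/\partial x_0=\frac{1}{\mu(\cos^2u-\cosh^2v)}\left(\sin u\cosh v\,\partial_u-\cos u\sinh v\,\partial_v\right)$, so $\harbaseq{n,m}$ carries a factor $(\cosh^2v-\cos^2u)^{-1}$ (cf.\ \eqref{eq:Vn_1,m}). Squaring and multiplying by the volume element $\mu^3(\cosh^2v-\cos^2u)\sin u\sinh v\,du\,dv\,d\phi$ cancels only one of the two such factors, so the integrand is \emph{not} ``a linear combination of products $P_j^m(\cos u)P_l^m(\cosh v)$'' as you assert; clearing the residual denominator already requires the nontrivial recurrence of Proposition \ref{prop:harrecurrenceformula}, iterated so as to write $\harbaseq{n,m}$ as a combination of the $\harorigq{n-2j,m}$. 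Even after that, the weight $(\cosh^2v-\cos^2u)$ couples the two variables: the $\cos^2 u$ and $\cosh^2 v$ factors shift degrees through the $tP_k^m(t)$ recurrence, and while the $u$-integrals enjoy orthogonality on $[-1,1]$, the $v$-integrals $\int_1^{1/\mu}P_k^m(t)P_{k'}^m(t)\,dt$ over the truncated interval satisfy no orthogonality at all, so several distinct such integrals survive and must be recombined --- by further recurrences or integration by parts --- into the single term $(1+\delta_{0,m})\,\mu^{2n+3}\coefii{n,m}\int_1^{1/\mu}P_n^m(t)P_{n+2}^m(t)\,dt$. That recombination, with the exact constant \eqref{coeficientnorm}, \emph{is} the quantitative content of the theorem, and your sketch asserts rather than performs it (your closing paragraph defers it as ``bookkeeping''); the $\mu\to0$ check cannot substitute for it, since a wrong $\mu$-dependent constant could pass the same limit. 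To close the gap, either carry the computation out (the manipulations in the proof of Theorem \ref{th:monogbase} show the scale of what is required), or do what the paper does: quote Garabedian's norm formula and track only the effect of the factor $\coefi{n+1,m}$ and of the rescaling.
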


The use of the particular coefficient $\coefi{n,m}$ in
\eqref{eq:prolatepreharmonics} is for the following.
\begin{prop}
  For every $x\in\R^3$, the limit
  $\lim_{\mu\to0}\harbase{n,m}{\pm}[\mu](x)$ exists and is given by
  $ \harbase{n,m}{\pm}[0](x)=(\partial/\partial x_0)
  \harorig{n+1,m}{\pm}[0](x)$,
  where $ \harorig{n+1,m}{\pm}[0]$ is the classical spherical
  harmonic (cf.\  \eqref{eq:sphericalharmonics}).
\end{prop}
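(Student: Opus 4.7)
By the definition \eqref{eq:prolateharmonics}, it suffices to prove the pointwise convergence $\harorig{n+1,m}{\pm}[\mu](x)\to\harorig{n+1,m}{\pm}[0](x)$ for every $x\in\R^3$ and then commute $\partial/\partial x_0$ with the limit. The second step is free: each $\harorig{n,m}{\pm}[\mu]$ is a polynomial in $(x_0,x_1,x_2)$ of degree at most $n$, so the family lies in a fixed finite-dimensional vector space, where pointwise convergence at any sufficiently generic collection of sample points forces convergence of all coefficients, and hence uniform $C^\infty$ convergence on compact sets.

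The pointwise limit itself reduces to a short asymptotic computation. From \eqref{eq:omega} one has $\omega\to 2|x|$ as $\mu\to 0$, so $\cos u=2x_0/\omega\to x_0/|x|$ and $P_n^m(\cos u)\to P_n^m(x_0/|x|)$ by continuity. The delicate factor is $\mu^n P_n^m(\cosh v)$ with $\cosh v=\omega/(2\mu)\to\infty$, for which I would invoke the elementary large-$t$ asymptotic
\[
  P_n^m(t)\;\sim\;\frac{(2n-1)!!}{(n-m)!}\,t^n,
\]
obtained by differentiating $m$ times the leading monomial $\frac{(2n-1)!!}{n!}t^n$ of $P_n(t)$ and absorbing the factor $(t^2-1)^{m/2}\sim t^m$. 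Combined with the algebraic identity $\mu\cosh v=\omega/2$, and noting that any lower-order term $\mu^n(\cosh v)^k=\mu^{n-k}(\omega/2)^k$ vanishes for $k<n$, this yields $\mu^n P_n^m(\cosh v)\to\bigl((2n-1)!!/(n-m)!\bigr)|x|^n$. The normalization $\coefi{n,m}=(n-m)!/(2n-1)!!$ from \eqref{eq:rescale} cancels the constant exactly, leaving $|x|^n P_n^m(x_0/|x|)\Phi_m^\pm(\phi)$, which is the spherical harmonic of \eqref{eq:sphericalharmonics}.

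The oblate case $\mu\in i\R^+$ requires no separate treatment. The identities $\cos u=2x_0/\omega$ and $\mu\cosh v=\omega/2$ continue to hold under the analytic continuation described after \eqref{eq:omega}, $\omega\to 2|x|$ remains real, and the Legendre asymptotic is valid along any complex ray; that the limiting expression is real is guaranteed a priori since $\harorig{n,m}{\pm}[\mu]$ is a real-valued polynomial for every admissible $\mu$. Applying $\partial/\partial x_0$ to the pointwise identity at index $n+1$ then gives the proposition. The only genuine obstacle is bookkeeping of the $(2n-1)!!$ and $(n-m)!$ factors---precisely what the choice of $\coefi{n,m}$ was designed to absorb---rather than any deep analytic estimate.
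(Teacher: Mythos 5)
Your proof is correct and follows essentially the same route as the paper's: reduce to pointwise convergence of $\harorigq{n+1,m}[\mu]\Phi_m^\pm$, use $\omega\to 2|x|$ and $2x_0/\omega\to x_0/|x|$, and cancel the large-$t$ asymptotic $P_n^m(t)\sim\bigl((2n-1)!!/(n-m)!\bigr)t^n$ at $t=\omega/(2\mu)$ against the normalization $\coefi{n,m}$. The only differences are cosmetic: you obtain that asymptotic from the leading coefficient of $P_n$ instead of the paper's explicit finite-sum representation of $P_n^m$ for $|t|>1$, and you make explicit the finite-dimensionality argument for commuting $\partial/\partial x_0$ with the limit, which the paper leaves implicit.
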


\begin{proof} It is sufficient to prove that
  $\harorig{n,m}{\pm}[\mu]\to\harorig{n,m}{\pm}[0]$.  Since $\phi$
  in \eqref{eq:prolatecoords} and \eqref{eq:sphericalharmonics} does
  not depend on $x_0$, we examine the factors
  $P_{n}^{m}(2x_0/\omega)P_{n}^{m}(\omega/(2\mu))$ in
  \eqref{eq:prolatepreharmonics}, with $\omega$ again given by
  \eqref{eq:omega}. Since
\[\sqrt{(x_0\pm\mu)^2 + x_1^2 + x_2^2} = |x|\pm(x_0/|x|)\mu +
  O(\mu^2),\]
we have $\omega=2|x|+O(\mu^2)$ as $\mu\to0$.

A direct computation using \eqref{eq:omega} shows that
$2x_0/\omega =x_0/|x|+O(\mu)$, so
$P_n^m(2x_0/\omega)\to P_n^m(x_0/x)$ as $\mu\to0$. It can be shown inductively that
$ \coefi{n,m} = 2^{-n}n!(n+m)!\sum_{k=m}^{n}\lambda_{k}^{n,m}$, where
$\lambda_{k}^{n,m} = ((n+m-k)!(n-k)!  (k-m)!k!)^{-1}$.
From the explicit representation
\[
P_{n}^{m}(t)=  \frac{n!(m+n)!}{2^n}(t^2-1)^{m/2}\displaystyle\sum_{k=m}^n\lambda_k^{n,m}(t-1)^{n-k}(t+1)^{k-m}
\]
valid for real $|t|>1$,
 we have  the required asymptotic behavior
\[   P_n^m(t) \simeq \dfrac{1}{\coefi{n,m}}   t^n  .
\]
 as $t=\omega/2\mu$ tends to infinity, which corresponds to
$\mu\to0$ for fixed $x$.
\end{proof}

The spherical harmonics are embedded in this 1-parameter
family of spheroidal harmonics.  In contrast, in treatments such as
\cite{Garabedian,Hobson,Morais2,Morais5,MoPeK,MoNgK,Nguyen}, the spheroidal harmonics degenerate
to a segment as the eccentricity of the spheroid decreases.

We introduce the notation
$\harbase{n,m}{\pm}[\mu]=\harbaseq{n,m}[\mu]\Phi_m^\pm$,
$\harorig{n,m}{\pm}[\mu]=\harorigq{n,m}[\mu]\Phi_m^\pm$ for use when the
factors $\Phi_m^\pm$ are not of interest. The following will be key in
the proof of Theorem \ref{th:monogbase} and it is based on the results of \cite{Morais2}.

\begin{prop}\label{prop:harrecurrenceformula} For each $n\geq 2$, the functions $\harbaseq{n,m}[\mu]$
  satisfy the recurrence relation
  \[ \harbaseq{n,m}[\mu] =  (n+m+1) \harorigq{n,m}[\mu] +
    \frac{\mu^2(n+m+1)(n+m)}{(2n+1)(2n-1)}\harbaseq{n-2,m}[\mu].
  \]
\end{prop}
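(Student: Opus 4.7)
The plan is to compute $\harbaseq{n,m}[\mu]=\partial_{x_0}\harorigq{n+1,m}[\mu]$ directly via the chain rule and reduce the result using the classical recurrences for the associated Legendre functions. The oblate case $\nu>0$ then follows by analytic continuation in $\mu$.

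Writing $s=\cos u$ and $t=\cosh v$, the first step is to invert the Jacobian of the planar map $(s,t)\mapsto(\mu s t,\mu\sqrt{(1-s^2)(t^2-1)})$; a short computation yields
\[
\frac{\partial s}{\partial x_0} = \frac{t(1-s^2)}{\mu(t^2-s^2)}, \qquad \frac{\partial t}{\partial x_0} = \frac{s(t^2-1)}{\mu(t^2-s^2)}.
\]
Inserting these into the chain rule expansion of $\partial_{x_0}[P_{n+1}^m(s)P_{n+1}^m(t)]$ and applying the derivative identities $(1-s^2)(P_{n+1}^m)'(s)=(n+m+1)P_n^m(s)-(n+1)sP_{n+1}^m(s)$ together with $(t^2-1)(P_{n+1}^m)'(t)=(n+1)tP_{n+1}^m(t)-(n+m+1)P_n^m(t)$, the two $(n+1)st\,P_{n+1}^m(s)P_{n+1}^m(t)$ contributions cancel and only the antisymmetric combination $t P_n^m(s)P_{n+1}^m(t)-sP_{n+1}^m(s)P_n^m(t)$ survives, divided by $\mu(t^2-s^2)$.

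The second step is to re-express this antisymmetric combination in terms of $P_n^m(s)P_n^m(t)$ and $P_{n-2}^m(s)P_n^m(t)-P_n^m(s)P_{n-2}^m(t)$, by two iterated applications of the three-term recurrence $(2k+1)tP_k^m(t)=(k-m+1)P_{k+1}^m(t)+(k+m)P_{k-1}^m(t)$, first at $k=n$ (to eliminate $P_{n+1}^m$) and then at $k=n-1$ (to eliminate $P_{n-1}^m$). Combined with the observation that $\harbaseq{n-2,m}[\mu]$ is itself, by the first step applied at index $n-2$, proportional to $[P_{n-2}^m(s)P_n^m(t)-P_n^m(s)P_{n-2}^m(t)]/(t^2-s^2)$, this yields the claimed identity after canceling the common factor $(t^2-s^2)$.

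The main obstacle is bookkeeping: the coefficients produced by the Legendre recurrences must be matched against the ratios $\coefi{n+1,m}/\coefi{n,m}$, $\coefi{n+1,m}/\coefi{n-1,m}$ and the relative powers of $\mu$ so as to reproduce exactly $(n+m+1)$ and $\mu^2(n+m+1)(n+m)/((2n+1)(2n-1))$. The key algebraic identity needed is $(2n+1)\,\coefi{n+1,m}=(n-m+1)\,\coefi{n,m}$, which is immediate from the definition \eqref{eq:rescale} and which one uses twice (at index $n$ and at index $n-1$); this is precisely the reason for the particular normalization chosen in \eqref{eq:prolatepreharmonics}.
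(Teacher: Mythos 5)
Your argument is correct and is essentially the paper's own proof: the paper likewise writes $\partial/\partial x_0$ in the coordinates \eqref{eq:prolatecoords} (your Jacobian inversion is exactly that formula), applies \eqref{eq:legendre3} to cancel the $(n+1)st\,P_{n+1}^m(s)P_{n+1}^m(t)$ terms and isolate the antisymmetric combination, and then uses \eqref{eq:legendre4} twice together with the identity $(2n+1)\,\coefi{n+1,m}=(n-m+1)\,\coefi{n,m}$. The only caveat is at the end: the first step applied at index $n-2$ expresses $\harbaseq{n-2,m}$ through the mixed combination $\bigl[\cosh v\,P_{n-2}^m(\cos u)P_{n-1}^m(\cosh v)-\cos u\,P_{n-1}^m(\cos u)P_{n-2}^m(\cosh v)\bigr]/(\cosh^2v-\cos^2u)$ (which is where the paper stops and matches), so identifying $\harbaseq{n-2,m}$ with your $(n-2,n)$ combination $\bigl[P_{n-2}^m(s)P_n^m(t)-P_n^m(s)P_{n-2}^m(t)\bigr]/(t^2-s^2)$ requires one further application of the same three-term recurrence at $k=n-1$ -- a true and harmless piece of bookkeeping, but not quite ``immediate from the first step'' as stated.
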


\begin{proof}
 We will assume that $\nu<0$, because the case $\nu>0$ is similar.
  From differentiating \eqref{eq:prolatecoords},
\[ \frac{\partial}{\partial x_0} = \frac{1}{\mu(\cos^2u - \cosh^2v)}
  (\sin u \cosh v\, \frac{\partial}{\partial u}
    - \cos u \sinh v\, \frac{\partial}{\partial v}),
\]
from which the definition \eqref{eq:prolateharmonics} gives
\begin{align}\label{eq:Vn_1,m}
  \frac{(\cos^2 u - \cosh^2 v)}{\coefi{n+1,m}\mu^{n}}
  \harbaseq{n,m}
  &=   (-\sin^2 u \cosh v\, P_{n+1}^{m} (\cosh v)(P_{n+1}^{m})' (\cos u)
  \nonumber\\
 & \quad\ -  \cos u \sinh^2 v\, P_{n+1}^{m}(\cos u)(P_{n+1}^{m})' (\cosh v)).
\end{align}
There are many well-known recurrence relations for the associated
Legendre functions (see for example \cite[Ch. III]{Hobson}). The relation
\begin{equation} (1 - t^2) (P_{n+1}^m)' (t) = (n + m + 1) P_{n}^m (t)
- (n + 1) t P_{n+1}^m (t) \label{eq:legendre3}
\end{equation}
yields that \eqref{eq:Vn_1,m} is equal to $(n+m+1)$ times
\begin{align}
    \cosh v\, P_{n}^{m}(\cos u)P_{n+1}^{m}(\cosh v)
  - \cos u\, P_{n+1}^{m}(\cos u) P_{n}^{m}(\cosh v) .
\end{align}
The further relation
\begin{equation} \label{eq:legendre4}
(n - m + 1) P_{n+1}^m (t) = (2n + 1) t P_{n}^m (t) - (n + m) P_{n-1}^m (t)
\end{equation}
shows that
\begin{align*}
\harbaseq{n,m}  = & (n+m+1) \harorigq{n,m}\\
  & +   \frac{\coefi{n,m} \mu^{n}(n+m+1)(n+m)}{(\cosh^2v - \cos^2u)(2n+1)}
     [\cos u \, P_{n-1}^{m}(\cos u)P_{n}^{m}(\cosh v)\\
  & -  \cosh v \,P_{n}^{m}(\cos u) P_{n-1}^{m}(\cosh v)],
\end{align*}
as is seen after substituting
\[ \coefi{n,m} = \left(\frac{2n+1}{n-m+1}\right)\coefi{n+1,m}.
\]
Using \eqref{eq:legendre4} again, straightforward computations show that
\begin{align*}
  \harbaseq{n,m}  = &
        (n+m+1)\harorigq{n,m}\\
 &+
\frac{\coefi{n-1,m} \mu^{n}(n+m+1)(n+m)(n+m-1)}{(\cosh^2v - \cos^2u) (2n-1)(2n+1)}\times \\
& \qquad   [\cosh v \,P_{n-2}^{m}(\cos u) P_{n-1}^{m}(\cosh v) \\
& \qquad -  \cos u \,P_{n-1}^{m}(\cos u)P_{n-2}^{m}(\cosh v)].
\end{align*}
The result now follows.
\end{proof}


\section{Spheroidal monogenic  functions\label{sec:monogenics}}

Regard $\R^3$ as the subset of the quaternions
$\H=\{x_0+x_1e_1+x_2e_2+x_3e_3\}$ for which $x_3=0$.  Although this
subspace is not closed under quaternionic multiplication (which is
defined, as is usual, so that $e_i^2=-1$ and $e_ie_j=-e_je_i$ for
$i\not=j$), it is possible to carry out a great deal of the analysis analogous to that of complex numbers \cite{Delanghe2007,Gurlebeck2,Morais,Morais3,Morais4,MoraisAG}.

Consider the Cauchy-Riemann
(or Fueter) operators
\begin{align}  \label{eq:defD}
  \partial = \frac{\partial}{\partial x_0} -
       \sum_{i=1}^2\frac{\partial}{\partial x_i} , \quad
  \Dbar = \frac{\partial}{\partial x_0} +
       \sum_{i=1}^2\frac{\partial}{\partial x_i} .
  \end{align}
A smooth function $f$ defined in an open set of $\R^3$ is
(left-)monogenic when $\partial f=0$, and (left-)antimonogenic
when $\Dbar f=0$ identically. The term ``hyperholomorphic''
is also commonly used.


\subsection{Construction of orthogonal basis of monogenics}

A basis of polynomials spanning the square-integrable solutions of
$\overline{\partial} f=0$ was given in \cite{Morais2} and another in \cite{Morais5} for
prolate spheroids, via explicit formulas. However,   following
\cite{Garabedian} here we take another approach to monogenic functions,
more suitable to our purposes. We consider simultaneously the prolate
and oblate cases of spheroids. Define the \textit{basic monogenic
  spheroidal polynomials} to be
\begin{equation}
  \monog{n,m}{\pm}[\mu] = \partial \harorig{n+1,m}{\pm}[\mu].
\end{equation}
They are indeed monogenic since $\harorig{n+1,m}{\pm}[\mu]$ is
harmonic, in view of the factorization $\Delta=\Dbar\partial$ of the
Laplacian. Now we will work out explicit expressions in terms of the
orthogonal basis of harmonic functions; some examples in low degree
are exhibited in Tables \ref{tab:firstmonogenics1} and
\ref{tab:firstmonogenics2}.

As was shown in \cite{Morais2}, the equality
\begin{equation} \label{e:relationharsubinneg}
\harbaseq{n,-1} = -\frac{1}{(n+1)(n+2)} \harbaseq{n,1}
\end{equation}
can be easily verified for $n\geq 0$. These functions will appear in
the representation \eqref{eq:sphmonogformula} for the case of
zero-order monogenic polynomials (see Theorem \ref{th:monogenics} below).

\begin{theo} \label{th:monogenics}
  For each $n\geq0$ and $0\leq m \leq n+1$, the basic spheroidal
  monogenic polynomial is equal to
 \begin{align}
 \monog{n,m}{\pm}[\mu]  &=
 \harbase{n,m}{\pm}[\mu]   \nonumber \\
 & \qquad + \frac{e_1}{2}\big((n+m+1) \harbase{n,m-1}{\pm}[\mu] -
  \frac{1}{n+m+2} \harbase{n,m+1}{\pm}[\mu] \big) \nonumber\\
 & \qquad \mp \frac{e_2}{2} \big((n+m+1) \harbase{n,m-1}{\mp}[\mu] +
 \frac{1}{n+m+2} \harbase{n,m+1}{\mp}[\mu] \big) \label{eq:sphmonogformula}
 \end{align}
where the harmonic polynomials $\harbase{n,m}{\pm}[\mu]$ were defined
in \eqref{eq:prolateharmonics}. The $\monog{n,m}{\pm}[\mu]$ are
polynomials in $\mu^2$ as well as in $x_0,x_1,x_2$.
\end{theo}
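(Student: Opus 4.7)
The plan is to verify the identity by direct computation starting from the definition $\monog{n,m}{\pm}[\mu] = \partial\,\harorig{n+1,m}{\pm}[\mu]$. Interpreting $\partial = \partial/\partial x_0 - e_1\,\partial/\partial x_1 - e_2\,\partial/\partial x_2$ in the usual quaternionic sense (as is required for the factorisation $\Delta = \Dbar\partial$), the scalar component $\partial/\partial x_0\,\harorig{n+1,m}{\pm}[\mu]$ is precisely $\harbase{n,m}{\pm}[\mu]$ by the definition \eqref{eq:prolateharmonics}, matching the leading term of the claimed formula. It therefore suffices to identify $-\partial/\partial x_1\,\harorig{n+1,m}{\pm}[\mu]$ and $-\partial/\partial x_2\,\harorig{n+1,m}{\pm}[\mu]$ with the stated $e_1$- and $e_2$-coefficients, respectively.

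I would pass to cylindrical coordinates $(x_0,\rho,\phi)$ with $\rho=\sqrt{x_1^2+x_2^2}$. Since $\harorig{n+1,m}{\pm}[\mu]=\harorigq{n+1,m}[\mu]\,\Phi_m^{\pm}$ with $\harorigq{n+1,m}[\mu]$ depending only on $(x_0,\rho)$, the chain rule gives $\partial/\partial x_1 = \cos\phi\,\partial_\rho - (\sin\phi/\rho)\,\partial_\phi$ and $\partial/\partial x_2 = \sin\phi\,\partial_\rho + (\cos\phi/\rho)\,\partial_\phi$. Combined with the product-to-sum identities applied to $\cos\phi\cdot\Phi_m^{\pm}$ and $\sin\phi\cdot\Phi_m^{\pm}$, this splits each $\partial/\partial x_i\,\harorig{n+1,m}{\pm}[\mu]$ into two pieces, one with angular factor of the form $\Phi_{m-1}$ and one with $\Phi_{m+1}$ (with an exchange of ``$\pm$'' parity between the $e_1$ and $e_2$ contributions). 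The radial coefficients are $A = \partial_\rho \harorigq{n+1,m}[\mu] + m\,\harorigq{n+1,m}[\mu]/\rho$ and $B = \partial_\rho \harorigq{n+1,m}[\mu] - m\,\harorigq{n+1,m}[\mu]/\rho$. This angular bookkeeping alone produces the pattern ``$\pm\to\pm$'' in the $e_1$ term and ``$\pm\to\mp$'' in the $e_2$ term, with the overall sign flip, as appears in \eqref{eq:sphmonogformula}.

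The task then reduces to the two scalar identities $A = -(n+m+1)\,\harbaseq{n,m-1}[\mu]$ and $B = \harbaseq{n,m+1}[\mu]/(n+m+2)$. Using the explicit form \eqref{eq:prolatepreharmonics} and the classical $m$-raising/lowering identities for associated Legendre functions (relating $P_n^{m\pm 1}$ to $P_n^m$ and $(P_n^m)'$), the operators $\partial_\rho\pm m/\rho$ convert $P_{n+1}^m(\cos u)P_{n+1}^m(\cosh v)$ into expressions of the form $P_{n+1}^{m\mp 1}(\cos u)P_{n+1}^{m\mp 1}(\cosh v)$; thus $A$ and $B$ are scalar multiples of $\harorigq{n+1,m-1}[\mu]$ and $\harorigq{n+1,m+1}[\mu]$, respectively. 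Proposition \ref{prop:harrecurrenceformula}, applied with $m$ replaced by $m\pm 1$, then rewrites these in terms of $\harbaseq{n,m\pm 1}[\mu]$, and elementary algebra yields the exact constants $n+m+1$ and $1/(n+m+2)$. The polynomial-in-$\mu^2$ assertion follows since $\harorig{n+1,m}{\pm}[\mu]$ is itself polynomial in $\mu^2$ (the outer $\mu^{n+1}$ factor of \eqref{eq:prolatepreharmonics} is absorbed by the $\mu^{-(n+1)}$ hidden in $P_{n+1}^m(\omega/(2\mu))$), and the Fueter operator $\partial$ contains no $\mu$.

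The principal difficulty lies in the two-stage coefficient tracking — first the Legendre $m$-raising/lowering identity producing $\harorigq{n+1,m\pm 1}[\mu]$, and then Proposition \ref{prop:harrecurrenceformula} shifting the degree index from $n+1$ back to $n$ — together with a clean treatment of the boundary cases: when $m=0$ the term $\harbase{n,-1}{\pm}[\mu]$ must be read through the convention \eqref{e:relationharsubinneg}, and when $m=n+1$ the term $\harbase{n,m+1}{\pm}[\mu]$ vanishes identically; both need to be absorbed consistently into the uniform formula \eqref{eq:sphmonogformula}.
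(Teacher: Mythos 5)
Your reduction is fine up to a point: passing to cylindrical coordinates, splitting off the angular factors, and reducing the theorem to the two scalar identities $A=-(n+m+1)\,\harbaseq{n,m-1}[\mu]$ and $B=\frac{1}{n+m+2}\,\harbaseq{n,m+1}[\mu]$, with $A=(\partial_\rho+m/\rho)\harorigq{n+1,m}[\mu]$ and $B=(\partial_\rho-m/\rho)\harorigq{n+1,m}[\mu]$, is a correct (and essentially equivalent) reformulation of \eqref{eq:sphmonogformula}. The gap is in how you propose to prove those identities. Your intermediate claim that $A$ and $B$ are scalar multiples of $\harorigq{n+1,m-1}[\mu]$ and $\harorigq{n+1,m+1}[\mu]$ cannot hold: $A$ and $B$ are polynomials of degree at most $n$, whereas $\harorigq{n+1,m\mp1}[\mu]$ has degree $n+1$; already in the spherical limit the ladder operators lower the degree as well as shift the order (for $n=0$, $m=1$, $A=(\partial_\rho+1/\rho)\harorigq{1,1}[0]$ is a constant, while $\harorigq{1,0}[0]=x_0$). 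In the genuinely spheroidal case the situation is worse, because expressing $\partial_\rho$ in the coordinates \eqref{eq:prolatecoords} introduces the factor $1/(\cosh^2v-\cos^2u)$, and what actually appears are mixed products such as $\sin u\cosh v\,P_{n+1}^{m}(\cos u)P_{n+1}^{m-1}(\cosh v)-\cos u\sinh v\,P_{n+1}^{m-1}(\cos u)P_{n+1}^{m}(\cosh v)$, not a single product $P_{n+1}^{m-1}(\cos u)P_{n+1}^{m-1}(\cosh v)$.

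The second, compounding problem is that Proposition \ref{prop:harrecurrenceformula} cannot perform the degree shift you assign to it: it keeps the order fixed and relates $\harbaseq{N,M}$, $\harorigq{N,M}$ and $\harbaseq{N-2,M}$, so applied with $N=n+1$, $M=m\mp1$ it would express $\harorigq{n+1,m\mp1}[\mu]$ through $\harbaseq{n+1,m\mp1}[\mu]$ and $\harbaseq{n-1,m\mp1}[\mu]$, never through the required $\harbaseq{n,m\mp1}[\mu]$. The paper does not use Proposition \ref{prop:harrecurrenceformula} here at all (it is needed later, for the norms in Theorem \ref{th:monogbase}); instead it writes the Fueter operator in spheroidal coordinates and proves your two scalar identities directly, using \eqref{eq:legendre3}, the order-raising/lowering relations $\sqrt{1-t^2}\,P_{n+1}^m(t)=(n-m)tP_{n+1}^{m-1}(t)-(n+m)P_n^{m-1}(t)$ and $(1-t^2)^{1/2}P_{n+1}^m(t)=\frac{1}{2n+3}\big(P_{n+2}^{m+1}(t)-P_{n}^{m+1}(t)\big)$ together with \eqref{eq:legendre4}, to identify the mixed products above, divided by $\cosh^2v-\cos^2u$, with $\harbaseq{n,m\mp1}[\mu]$ as computed from its defining $x_0$-derivative \eqref{eq:prolateharmonics}. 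To repair your argument you must replace the step ``multiple of $\harorigq{n+1,m\mp1}$, then Proposition \ref{prop:harrecurrenceformula}'' by such a direct verification of the identities for $A$ and $B$; the remaining ingredients of your outline (the angular bookkeeping, the treatment of $m=0$ via \eqref{e:relationharsubinneg} and of $m=n+1$, and the parity argument for polynomiality in $\mu^2$) are in order.
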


\begin{proof}
  The full operator \eqref{eq:defD} in spheroidal coordinates \eqref{eq:prolatecoords} is
\begin{align*}
  \partial =& \frac{1}{\mu(\cosh^2v-\cos^2u)}
       \bigg[(\cos u\sinh v  \frac{\partial}{\partial v} -
      \sin u \cosh v   \frac{\partial}{\partial u}) \\
     &\qquad - (e_1\cos\phi+e_2\sin \phi)
       (\cos u\sinh v \frac{\partial}{\partial u} +
        \sin u\cosh v \frac{\partial}{\partial v}) \\
     &\qquad\qquad\qquad  \times\frac{(-e_1\sin\phi+e_2\cos\phi)}
       {\mu \sin u\sinh v} \frac{\partial}{\partial\phi} \bigg].
\end{align*}
The first line of this expression applied to
$\harorig{n+1,m}{\pm}[\mu]$ produces the scalar part of
$\monog{n,m}{\pm}[\mu]$ in
\eqref{eq:sphmonogformula} and was calculated in \cite{Morais2,Morais5}. For
the nonscalar part, we use the relation \eqref{eq:legendre3} to obtain
\begin{align*}
 \quad & \frac{2}{\mu^{n+1} \coefi{n+1,m}\Phi^\pm_m}
    ( \cos u\sinh v \,\partial_u + \sin u\cosh v \,\partial_v)
    \harorig{n+1,m}{\pm}[\mu] \hspace*{.22\textwidth} \\
 &\quad\quad = (n+m+1)(n-m+2)\big(\sin u \cosh v P_{n+1}^m(\cos u)P_{n+1}^{m-1}(\cosh v)\\
 &\quad\qquad\quad\quad\  - \cos u\sinh v\, P_{n+1}^{m-1}(\cos u) P_{n+1}^m (\cosh v)\big)\\
 &\quad\qquad + \big(\sin u \cosh v P_{n+1}^m(\cos u)P_{n+1}^{m+1}(\cosh v)\\
 &\quad \quad\qquad\quad\ + \cos u\sinh v P_{n+1}^{m+1}(\cos u) P_{n+1}^m(\cosh v) \big) .
\end{align*}
Next, the relation
\[ \sqrt{1-t^2}P_{n+1}^m(t)=(n-m)tP_{n+1}^{m-1}(t)-(n+m)P_n^{m-1}(t)
\]
(valid for $|t| < 1$, and replacing $1-t^2$ with $t^2-1$ for $|t|>1$) produces
\begin{align}
  -\frac{(\cosh^2v-\cos^2u)}{\mu^n\coefi{n+1,m-1}}
  \harbaseq{n,m-1}
  & = \sin u\cosh v   \,P_{n+1}^m(\cos u)P_{n+1}^{m-1}(\cosh v) \nonumber\\
  &\quad - \cos u\sinh v  \, P_{n+1}^m(\cosh v)P_{n+1}^{m-1}(\cos u).
 \end{align}
Furthermore, using the expression
\[ (1-t^2)^{1/2}\,P_{n+1}^m(t)=\frac{1}{2n+3}\,(P_{n+2}^{m+1}(t)-P_{n}^{m+1}(t)),
\]
and its counterpart for $|t|>1$, and then applying
\eqref{eq:legendre4}, we arrive at
\begin{align}
\cosh v\sin u P_{n+1}^m(\cos u)P_{n+1}^{m+1}(\cosh v)&+ \sinh v\cos u P_{n+1}^m(\cosh v)P_{n+1}^{m+1}(\cos u) \nonumber\\
& =  \frac{(\cosh^2v-\cos^2u)\coefi{n+1,m+1}}{(n-m+1)\mu^n} \harbaseq{n,m+1}.
\end{align}
Similarly, one can prove that
\begin{align}
\frac{1}{\sin u \sinh v}\partial_{\phi}\harorig{n+1,m}{\pm}[\mu]
 = -\frac{\mu^{n+1}\Phi_m^\mp}{2\coefi{n+1,m} (\cosh^2v-\cos^2u)}&\nonumber\\
\times\bigg[(n+m)(n+m+1)(n-m+2) \harbaseq{n,m-1}  & +  \frac{1}{n-m+1} \harbaseq{n,m+1}\bigg].
\end{align}
Combining these three formulas one easily obtains the desired expressions for
$(\partial/\partial x_1)\harorig{n+1,m}{\pm}[\mu]$ and
$(\partial/\partial x_2)\harorig{n+1,m}{\pm}[\mu]$.

Since the basic spheroidal harmonics of \cite{Garabedian} are
polynomials of degree $n$, it is clear that the operations of
rescaling by $1/\mu$ or $i/\mu$ and multiplying by $\mu^n$ implied in
\eqref{eq:prolatepreharmonics} assure that $\harorig{n,m}{\pm}[\mu]$
are polynomials in $\mu$.  From the discussion of Section
\ref{sec:harmonics} it is clear that $-\mu$ produces the same
results as $\mu$, so the only powers of $\mu$ which appear are even.
\end{proof}

\begin{table}[hb!]
\[ \begin{array}{|c|c|l|}
\hline
n & m &  \monog{n,m}{\pm}\\
\hline
\multirow{3}{*}{0} & 0 & \s\monog{0,0}{+}=1\\[.5ex]
\cline{2-3}
& \multirow{2}{*}{1} & \s\monog{0,1}{+}=e_1\\
& &\s\monog{0,1}{-}=e_2\\[.5ex]
\cline{1-3}
\multirow{6}{*}{1}& 0 & \s\monog{1,0}{+}=2x_0+x_1e_1+x_2e_2 \\
\cline{2-3}
 & \multirow{2}{*}{1} & \s\monog{1,1}{+}=-3x_1+3x_0e_1\\
 & & \s\monog{1,1}{-}=-3x_2+3x_0e_2\\[.5ex]
 \cline{2-3}
 & \multirow{2}{*}{2} & \s\monog{1,2}{+}=-6x_1e_1+6x_2e_2 \\
 & & \monog{1,2}{-}=-6x_2e_1-6x_1e_2\\[.5ex]
 \cline{1-3}
 \multirow{7}{*}{2}& 0 & \s\monog{2,0}{+}=\big(3x_0^2-\dfrac{3x_1^2}{2}-\dfrac{3x_2^2}{2}-\dfrac{3\mu^2}{5}\big)+3x_0x_1e_1+3x_0x_2e_2 \\[1ex]
 \cline{2-3}
 & \multirow{2}{*}{1} & \s\monog{2,1}{+}=-12x_0x_1+\big(6x_0^2-\dfrac{9x_1^2}{2}-\dfrac{3x_2^2}{2}-\dfrac{6\mu^2}{5}\big)e_1-3x_1x_2e_2 \\
 & & \s\monog{2,1}{-}=-12x_0x_2-3x_1x_2e_1+\big(6x_0^2-\dfrac{3x_1^2}{2}-\dfrac{9x_2^2}{2}-\dfrac{6\mu^2}{5}\big)e_2 \\[1ex]
 \cline{2-3}
 & \multirow{2}{*}{2} & \s\monog{2,2}{+}=15x_1^2-15x_2^2-30x_0x_1e_1+30x_0x_2e_2  \\
 & & \s\monog{2,2}{-}=30x_1x_2-30x_0x_2e_1-30x_0x_1e_2\\[.5ex]
 \cline{2-3}
 & \multirow{2}{*}{3} & \s\monog{2,3}{+}=\big(45x_1^2-45x_2^2\big)e_1-90x_1x_2e_2\\
 & & \s\monog{2,3}{-}=90x_1x_2e_1+\big(45x_1^2-45x_2^2\big)e_2\\[.5ex]
 \cline{1-3}
\end{array} \]
\caption{Spheroidal monogenic basis polynomials of degree $n=0,1,2$. Observe
  that the parameter $\mu$ appears when $|n-m|\geq2$. For each $n$, the last two
  entries are monogenic constants (and the first entry for $n=0$). }
  \label{tab:firstmonogenics1}
\end{table}

\begin{table}[b!]
\[ \begin{array}{|c|c|l|}\hline
n & m & \monog{n,m}{\pm}\\
\hline
\multirow{16}{*}{3} & &
 {\s\monog{3,0}{+}=\big(4x_0^3-6x_0x_1^2-6x_0x_2^2-\dfrac{12x_0\mu^2}{7}\big)}\\
 & 0 &\qquad\quad +\big(6x_0^2x_1-\dfrac{3 x_1^3}{2}-\dfrac{3x_1x_2^2}{2}-\dfrac{6x_1\mu^2}{7}\big)e_1\\
 & &\qquad\quad   +\big(6x_0^2x_2-\dfrac{3 x_1^2x_2}{2}-\dfrac{3x_2^3}{2}-\dfrac{6x_2\mu^2}{7}\big)e_2\\[1ex]
 \cline{2-3}
 & & \s\monog{3,1}{+}=-30x_0^2x_1+\dfrac{15x_1^3}{2}+\dfrac{15x_1x_2^2}{2}+\dfrac{30x_1\mu^2}{7}\\
 & 1 &\qquad\quad  +\big(10x_0^3-\dfrac{45x_0x_1^2}{2}- \dfrac{15x_0x_2^2}{2}-\dfrac{30x_0\mu^2}{7}\big)e_1-15x_0x_1x_2e_2\\[.5ex]
 & & \s\monog{3,1}{-}=-30x_0^2x_2+\dfrac{15x_1^2x_2}{2}+\dfrac{15x_2^3}{2}+\dfrac{30x_2\mu^2}{7}\\
 & &\qquad\quad  -15x_0x_1x_2e_1+\big(5x_0^3-\dfrac{15x_0x_1^2}{2}- \dfrac{45x_0x_2^2}{2}-\dfrac{30x_0\mu^2}{7}\big)e_2\\[1.2ex]
 \cline{2-3}
 & &  \s\monog{3,2}{+}=90x_0x_1^2-90x_0x_2^2+\big(-90x_0^2x_1+30x_1^3+\dfrac{90x_1\mu^2}{7}\big)e_1\\
 & &\qquad\quad  +\big(90x_0^2x_2-30x_2^3-\dfrac{90x_2\mu^2}{7}\big)e_2\\
 & 2 & \s\monog{3,2}{-}=180x_0x_1x_2+\big(-90x_0^2x_2+45x_1^2x_2+15x_2^3+\dfrac{90x_2\mu^2}{7}\big)e_1\\
 & &\qquad\quad  +\big(-90x_0^2x_1+15x_1^3+45x_1x_2^2+\dfrac{90x_1\mu^2}{7}\big)e_2\\[1ex]
 \cline{2-3}
 & 3 & \s\monog{3,3}{+}=-105x_1^3+315x_1x_2^2+\big(315x_0x_1^2-315x_0x_2^2\big)e_1-630x_0x_1x_2e_2\\
 & & \s\monog{3,3}{-}=-315x_1^2x_2+105x_2^3+630x_0x_1x_2e_1+\big(315x_0x_1^2-315x_0x_2^2\big)e_2\\[.5ex]
 \cline{2-3}
 & 4 & \s\monog{3,4}{+}=(-420x_1^3+1260x_1x_2^2)e_1+(1260x_1^2x_2-420x_2^3)e_2\\
 & & \s\monog{3,4}{-}=(-1260x_1^2x_2+420x_2^3)e_1+(-420x_1^3+1260x_1x_2^2)e_2\\[.5ex]
 \hline
\end{array} \]
\caption{Spheroidal monogenic polynomials of degree $n=3$.  }
  \label{tab:firstmonogenics2}
  \end{table}

We will write
$\langle \cdot,\cdot \rangle_{[\mu]}= \langle \cdot,\cdot \rangle_{L_2
  (\Omega_\mu, \R^3)}$
for the $L_2$ inner product in the spheroidal domain $\Omega_\mu$, and
$\|\cdot\|_{[\mu]}$ for the corresponding norm.

\begin{theo}\label{th:monogbase} For fixed $\mu$,
  the monogenic polynomials $\monog{n,m}{\pm}[\mu]$ are orthogonal
  with respect to the inner product
  $\langle \cdot,\cdot \rangle_{[\mu]}$.  Their norms are given by
\begin{align*}
 \|\monog{n,m}{\pm}&\|_{[\mu]}^2
   = \dfrac{\pi \, \mu^{2n+3}}
    {(n+2)(n+m+2)(2n+1)!! (2n+3)!!} \,\big[  \\
  & \!\! (n+2)(n+m)(n+m+1)(n-m+3)! (n+m+2)! I[n,m-1] \\[1ex]
  & + 2\delta_{0,m}(n+m+2) (n+1)! (n+2)! I[n,1]\\
  & + (n+2) (n-m+1)! (n+m+2)! \big( I(n,m+1)\\
  & \qquad   + 2(n-m+2)(n+m+1)(1+\delta_{0,m})I[n,m]\big)\big]
   \end{align*}
  where
  \begin{equation} \label{eq:Imn}
     I[n,m]=\int_{1}^{1/\mu}P_{n}^{m}(t)\,P_{n+2}^{m}(t)\,dt.
  \end{equation}
 \end{theo}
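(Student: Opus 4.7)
The plan is to reduce everything to the scalar harmonic theory already established in Theorem~\ref{propgarabedian}, by exploiting the explicit formula \eqref{eq:sphmonogformula} for $\monog{n,m}{\pm}[\mu]$. Writing $\monog{n,m}{\pm}=(\monog{n,m}{\pm})_0+(\monog{n,m}{\pm})_1\,e_1+(\monog{n,m}{\pm})_2\,e_2$, the $\R^3$-valued inner product
\[
\langle\monog{n,m}{\varepsilon},\monog{n',m'}{\varepsilon'}\rangle_{[\mu]} \;=\; \sum_{i=0}^{2}\langle(\monog{n,m}{\varepsilon})_i,(\monog{n',m'}{\varepsilon'})_i\rangle_{L_2(\Omega_\mu)}
\]
splits into three scalar integrals, each a short linear combination of pairings $\langle\harbase{n,k}{\pm},\harbase{n',k'}{\pm}\rangle_{L_2(\Omega_\mu)}$, to which Theorem~\ref{propgarabedian} applies directly.

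For orthogonality I argue by cases. Different degrees $n\ne n'$ are immediate. For the same $n$, opposite-parity pairs $(\monog{n,m}{+},\monog{n,m'}{-})$ are also immediate, because the scalar and $e_1$-components of $\monog{n,m}{+}$ involve only ``$+$''-type harmonics while those of $\monog{n,m'}{-}$ involve only ``$-$''-type, and the $e_2$-components swap parities. The only nontrivial case is same-parity with $m'=m\pm 2$, where a single shared index (say $m+1$) appears in the $e_1$-component of one factor and in the $e_2$-component of the other. Reading off the coefficients from \eqref{eq:sphmonogformula}, one finds exactly two surviving cross-terms, one involving $\harbase{n,m+1}{+}$ and one involving $\harbase{n,m+1}{-}$, of equal magnitude and opposite sign; they cancel by the key symmetry $\|\harbase{n,k}{+}\|^2=\|\harbase{n,k}{-}\|^2$ for $k\geq 1$, which is read off from \eqref{e:harqnorms}. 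The border case $m=0$ reduces to the generic one by first substituting \eqref{e:relationharsubinneg} for $\harbase{n,-1}{\pm}$.

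The norm uses the same componentwise decomposition. Inside the $e_1$- and $e_2$-components the two summands are harmonics with indices differing by two, so their cross-terms vanish by harmonic orthogonality, yielding
\begin{align*}
\|\monog{n,m}{\pm}\|_{[\mu]}^2
&=\|\harbase{n,m}{\pm}\|^2
+\tfrac{(n+m+1)^2}{4}\bigl(\|\harbase{n,m-1}{\pm}\|^2+\|\harbase{n,m-1}{\mp}\|^2\bigr)\\
&\quad+\tfrac{1}{4(n+m+2)^2}\bigl(\|\harbase{n,m+1}{\pm}\|^2+\|\harbase{n,m+1}{\mp}\|^2\bigr).
\end{align*}
Substituting \eqref{e:harqnorms} together with the explicit $\coefii{n,k}$ from \eqref{coeficientnorm} and pulling out the common factor $\pi\mu^{2n+3}/[(n+2)(n+m+2)(2n+1)!!(2n+3)!!]$ produces the four-term combination in the statement. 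The isolated $2\delta_{0,m}\,I[n,1]$ summand arises precisely from the $m=0$ substitution via \eqref{e:relationharsubinneg}: the two $\harbase{n,-1}{\pm}$ pieces collapse into multiples of $\harbase{n,1}{\pm}$ and feed an extra contribution into $I[n,1]$ rather than into a would-be $I[n,-1]$.

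The main obstacle is bookkeeping rather than analysis. One must carefully line up the $(1+\delta_{0,k})$ multiplicities coming from \eqref{e:harqnorms} for $k=m-1,m,m+1$, the three values of $\coefii{n,k}$ with their $(n\pm k)!$ and $(2n\pm 1)!!$ ingredients, and the collapse at $m=0$, and then reorganise the outcome into the single prefactored four-term expression of the statement. The one genuinely nontrivial observation is the $|m-m'|=2$ cancellation in the orthogonality step, which uses exactly the symmetry $\|\harbase{n,k}{+}\|=\|\harbase{n,k}{-}\|$ that \eqref{e:harqnorms} provides; everything else is disciplined algebra.
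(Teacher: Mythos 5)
Your proposal is correct in substance and follows the same overall strategy as the paper --- split $\monog{n,m}{\pm}[\mu]$ into its scalar, $e_1$, $e_2$ components via \eqref{eq:sphmonogformula} and reduce everything to the Garabedian basis of Theorem \ref{propgarabedian} --- but it differs in two places worth noting. First, where you dispose of the surviving same-parity, $|m-m'|=2$ cross-terms (and of the $\phi$-dependence generally) by the symmetry $\|\harbase{n,k}{+}\|_{[\mu]}=\|\harbase{n,k}{-}\|_{[\mu]}$, the paper instead multiplies the angular factors out with product-to-sum identities for $\Phi_m^\pm$ and integrates in $\phi$, which produces the Kronecker deltas in $m$ directly; the two mechanisms are equivalent (your norm equality is exactly $\int\cos^2 m\phi=\int\sin^2 m\phi$), and your verified coefficient computation $\mp\frac{n+m+3}{4(n+m+2)}$ does make the cancellation work. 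Second, and more significantly, you never invoke Proposition \ref{prop:harrecurrenceformula}: after the angular reduction the paper still has to evaluate the residual $(u,v)$-integrals of products $\harbaseq{n,k}\harbaseq{n',k}$, and it does so through the recurrence of Proposition \ref{prop:harrecurrenceformula} combined with the orthogonality of Theorem \ref{propgarabedian}, essentially re-deriving the $I[n,k]$ integrals; you shortcut this by identifying every surviving integral with a norm $\|\harbase{n,k}{\pm}\|_{[\mu]}^2$ and quoting \eqref{e:harqnorms}, which is legitimate since that formula is part of Theorem \ref{propgarabedian}, and it buys a leaner argument at the price of leaving the final coefficient identity as pure bookkeeping in $\coefii{n,k}$. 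One presentational caveat: your displayed intermediate norm formula (cross-terms inside the $e_1$-, $e_2$-components vanish) is literally valid only for $m\ge 1$; for $m=0$ the pieces $\harbase{n,-1}{\pm}$ are proportional to $\harbase{n,1}{\pm}$ by \eqref{e:relationharsubinneg}, so one must substitute first and only then expand --- you do say this in the next sentence, so it is a matter of ordering the steps rather than a gap, but in a written-out proof the $m=0$ case (both for the norm and for the orthogonality against $\monog{n,2}{+}$) should be carried out explicitly rather than declared to ``reduce to the generic one.''
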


\begin{proof}
  Throughout this proof, we will denote by $[f]_i$ ($i=0,1,2$)
  the components of a function $f\colon\Omega_{\mu} \to \R^3$. Thus
\begin{align*}
\langle \monog{n_1,m_1}{\pm}[\mu], \monog{n_2,m_2}{\pm}[\mu] \rangle_{[\mu]}
=&  \int_{\Omega_\mu} \big(\left[\monog{n_1,m_1}{\pm}[\mu]\right]_0
\left[\monog{n_2,m_2}{\pm}[\mu]\right]_0\\
&\qquad + \left[\monog{n_1,m_1}{\pm}[\mu]\right]_1
\left[\monog{n_2,m_2}{\pm}[\mu]\right]_1\\
&\qquad +  \left[\monog{n_1,m_1}{\pm}[\mu]\right]_2
\left[\monog{n_2,m_2}{\pm}[\mu]\right]_2 \big)  \,dx
\end{align*}
where $dx = dx_0\,dx_1\,dx_2$.
By Theorem \ref{th:monogenics} and Proposition \ref{propgarabedian},
\begin{equation}\label{eq:scmonogenicnorm}
\int_{\Omega_\mu} \left[\monog{n_1,m_1}{\pm}[\mu]\right]_0
\left[\monog{n_2,m_2}{\pm}[\mu]\right]_0  \,dx =
  \| \harbase{n_1,m_1}{\pm}[\mu] \|^2_{[\mu]} \,  \delta_{n_1,n_2}
  \delta_{m_1,m_2}.
\end{equation}
Thus, to verify the orthogonality of the $\monog{n,m}{\pm}[\mu]$ it
suffices to show that the vector parts of the functions
$\monog{n,m}{\pm}[\mu]$ are orthogonal.

Expanding the integrands and applying the trigonometric identities
\begin{align*}
  \Phi_{m_1-1}^\pm \Phi_{m_2-1}^\pm   + \Phi_{m_1-1}^\mp \Phi_{m_2-1}^\mp
  &= \Phi_{m_1-m_2}^+ ,\\
  \Phi_{m_1+1}^\pm \Phi_{m_2+1}^\pm   + \Phi_{m_1+1}^\mp \Phi_{m_2+1}^\mp
  &= \Phi_{m_1-m_2}^+ ,\\
  -\Phi_{m_1-1}^\pm \Phi_{m_2+1}^\pm  + \Phi_{m_1-1}^\mp \Phi_{m_2+1}^\mp
  &= \mp \Phi_{m_1+m_2}^+ ,\\
  -\Phi_{m_1+1}^\pm \Phi_{m_2-1}^\pm  + \Phi_{m_1+1}^\mp \Phi_{m_2-1}^\mp
  &= \mp \Phi_{m_1+m_2}^+,
\end{align*}
we obtain that
\begin{align*}
   \int_{\Omega_\mu} &\big(\left[\monog{n_1,m_1}{\pm}[\mu]\right]_1
\left[\monog{n_2,m_2}{\pm}[\mu]\right]_1
  + \left[\monog{n_1,m_1}{\pm}[\mu]\right]_2
  \left[\monog{n_2,m_2}{\pm}[\mu]\right]_2\big) \,dx \\
 =& \frac{1}{4} \bigg(p_1p_2
   \int_{\Omega_\mu}\harbaseq{n_1,m_1-1}[\mu] \harbaseq{n_2,m_2-1}[\mu]\Phi_{m_1-m_2}^+
    \, dx   \\
 &  \mp \frac{p_1}{p_2+1} \int_{\Omega_\mu} \harbaseq{n_1,m_1-1}[\mu]
\harbaseq{n_2,m_2+1}[\mu] \Phi_{m_1+m_2}^+  \, dx \\
   & \mp \frac{p_2}{p_1+1} \int_{\Omega_\mu} \harbaseq{n_1,m_1+1}[\mu]
\harbaseq{n_2,m_2-1}[\mu]\Phi_{m_1+m_2}^+  \, dx \\
   & + \frac{1}{(p_1+1) (p_2+1)} \int_{\Omega_\mu} \harbaseq{n_1,m_1+1}[\mu]
\harbaseq{n_2,m_2+1}[\mu] \Phi_{m_1-m_2}^+  \, dx    \bigg )
\end{align*}
where $p_i=m_i+n_i+1$ $(i=1,2)$. We continue the calculation only for the
prolate case, applying the coordinates \eqref{eq:prolatecoords}
which give
\[ dx = dR\,d\phi, \]
where $dR=\mu^3(\cosh^2 v - \cos^2 u) \sin u \sinh v\,du\,dv$.

The identities
$\int_{0}^{2\pi}\Phi_{m_1\pm m_2}(\phi) d\phi=2\pi\delta_{m_1,m_2}$
for $m_1,m_2>0$ imply that
\begin{align*}
\int_{\Omega_\mu} & \left(\left[\monog{n_1,m_1}{\pm}[\mu]\right]_1
\left[\monog{n_2,m_2}{\pm}[\mu]\right]_1 +
\left[\monog{n_1,m_1}{\pm}[\mu]\right]_2
\left[\monog{n_2,m_2}{\pm}[\mu]\right]_2\right) \,  dx   \\
 =& \frac{\pi p_1(n_2+m_1+1)\delta_{m_1,m_2}}{2} \int_{0}^{1/\mu}
\int_{0}^{\pi}\harbaseq{n_1,m_1-1}[\mu] \harbaseq{n_2,m_1-1}[\mu]\, dR \\
 & \mp \pi \frac{n_2+1}{2(n_1+2)}\delta_{m_1,0} \int_{0}^{1/\mu} \int_{0}^{\pi}
\harbaseq{n_1,1}[\mu] \harbaseq{n_2,-1}[\mu]\,dR \\
  & \mp \pi \frac{n_1+1}{2(n_2+2)}\delta_{m_1,0} \int_{0}^{1/\mu} \int_{0}^{\pi}
\harbaseq{n_1,-1}[\mu] \harbaseq{n_2,1}[\mu] \,dR \\
 & + \frac{\pi}{2(p_1+1)(n_2+m_1+1)}\delta_{m_1,m_2} \int_{0}^{1/\mu}
\int_{0}^{\pi} \harbaseq{n_1,m_1+1}[\mu] \harbaseq{n_2,m_1+1}[\mu] \,dR.
\end{align*}
In consequence, using \eqref{e:relationharsubinneg}, we have
\begin{align*}
  \int_{\Omega_\mu}& \left(\left[\monog{n_1,m_1}{\pm}[\mu]\right]_1
  \left[\monog{n_2,m_2}{\pm}[\mu]\right]_1
 + \left[\monog{n_1,m_1}{\pm}[\mu]\right]_2
  \left[\monog{n_2,m_2}{\pm}[\mu]\right]_2\right)   \,dx \\
  =& \frac{\pi p_1(n_2+m_1+1)\delta_{m_1,m_2}}{2}
\int_{0}^{1/\mu} \int_{0}^{\pi}\harbaseq{n_1,m_1-1}[\mu] \harbaseq{n_2,m_1-1}[\mu]
\,dR \\
 & \pm \frac{\pi}{(n_1+2)(n_2+2)}\delta_{m_1,0} \int_{0}^{1/\mu}
\int_{0}^{\pi} \harbaseq{n_1,1}[\mu] \harbaseq{n_2,1}[\mu] \,dR \\
  &  + \frac{\pi}{2p_1(n_2+m_1+1)}\delta_{m_1,m_2} \int_{0}^{1/\mu}
 \int_{0}^{\pi} \harbaseq{n_1,m_1+1}[\mu]\harbaseq{n_2,m_1+1}[\mu] \,dR.
\end{align*}
Using Proposition \ref{prop:harrecurrenceformula}, and applying again the
orthogonality of Theorem \ref{propgarabedian}, we are left with
\begin{align}
  \int_{\Omega_\mu}& \left(\left[\monog{n_1,m_1}{\pm}[\mu]\right]_1
 \left[\monog{n_2,m_2}{\pm}[\mu]\right]_1 +
 \left[\monog{n_1,m_1}{\pm}[\mu]\right]_2
 \left[\monog{n_2,m_2}{\pm}[\mu]\right]_2\right) \, dx  \nonumber\\
  &=   \frac{\pi \mu^{2n_1+3}}{(n_1+2) (2n_1+1)!! (2n_1+3)!!} \big[(n_1+2)(n_1+m_1+1)! \nonumber\\
  &\quad \times \big((n_1+m_1)(n_1+m_1+1)(n_1-m_1+3)! I[n_1,m_1-1]  \nonumber\\[1ex]
  &\qquad + (n_1-m_1+1)! I[n_1,m_1+1]\big)\nonumber\\
  &\qquad +2 (n_1+1)! (n_1+2)! I[n_1,1]\delta_{0,m}\big] \delta_{m_1,m_2}\delta_{n_1,n_2}
   \label{eq:vecmonogenicnorm}
\end{align}
with $I[n,m]$ defined in \eqref{eq:Imn}.  Combining
  \eqref{eq:scmonogenicnorm} and \eqref{eq:vecmonogenicnorm}, we
  conclude that
\begin{align*}
  \langle\monog{n_1,m_1}{+},\monog{n_2,m_2}{+}\rangle_{[\mu]} &=0
   \text{ when } n_1\neq n_2 \text{ or } m_1\neq m_2,\\
  \langle\monog{n_1,m_1}{-},\monog{n_2,m_2}{-}\rangle_{[\mu]} &=0
    \text{ when } n_1\neq n_2\text{ or } m_1\neq m_2.
\end{align*}
Using once more the orthogonality of the system $\{\Phi_m^\pm\}$
on $[0,2\pi]$, we conclude that
\[ \langle\monog{n_1,m_1}{\pm},\monog{n_2,m_2}{\mp}\rangle_{[\mu]} = 0
\]
when the indices do not coincide.  The calculation of the norms comes
from taking $n_1=n_2$ and $m_1=m_2$ in \eqref{eq:vecmonogenicnorm} and
adding the expression \eqref{e:harqnorms}.
\end{proof}

In the next subsection we make precise how the monogenic polynomials sit
in the space of harmonic polynomials.  It is well known (cf.\
\cite{Leutwiler}) that the dimension of the space
$\M^{(n)}$ of homogeneous monogenic polynomials of degree $n$ in
$(x_0,x_1,x_2)$ is $2n+3$ (this does not depend on the domain
$\Omega$).  Since the polynomials we are working with are not
homogeneous, we consider the space $\M_*^{(n)}=\bigcup_{0\le k\le n}\M^{(n)}$ of monogenic
polynomials of degree $n$, a class which is not altered by adding
monogenic polynomials of lower degree. Thus
\begin{equation} \label{eq:dimM*}
  \dim  \M_*^{(n)}  = \sum_{k=0}^{n} (2k+3) = (n+3)(n+1).
\end{equation}

 Consider the collections of $2n+3$ polynomials
\[  B_n[\mu]=  \{\monog{k,m}{+}[\mu],\ 0 \leq m \leq k+1\} \cup
  \{ \monog{k,m}{-}[\mu], 1 \leq m \leq k+1\}. \]
By Theorem \ref{th:monogbase} and \eqref{eq:dimM*}, the union
\[ \bigcup_{0 \leq k \leq n} B_n[\mu] \]
is an orthogonal basis for $\M_*^{(n)}$.  By the symmetric form taken
by $\monog{m,n}{\pm}[\mu]$ in \eqref{eq:sphmonogformula}, we know that
when $m \neq 0$,
\[ \|\monog{n,m}{+}[\mu] \|_{[\mu]} =  \|\monog{n,m}{-}[\mu]\|_{[\mu]}.
\]


\subsection{Spheroidal ambigenic polynomials}

Facts about antimonogenic functions are generally trivial
modifications of facts about monogenic functions, obtained by taking the conjugate. However, in order to discuss contragenic functions below
it will be necessary to discuss first the subspace of the
$\R^3$-valued harmonic functions generated by the monogenic and
antimonogenic functions together. Elements of this space were termed
\textit{ambigenic} functions in \cite{Alvarez}.

It is known \cite{Morais3}, and easy to verify, that $f$ is antimonogenic if and
only if $\overline{f}$ is monogenic. The decomposition of an ambigenic
function as a sum of a monogenic and an antimonogenic function is not
unique, so we must take into account the set
$\M(\Omega)\cap\overline{\M}(\Omega)$ of monogenic constants in the
domain $\Omega\subseteq\R^3$. Monogenic constants do not depend on
$x_0$ and can be expressed as
\[ f = a_0+f_1e_1+f_2e_2,
\]
where $a_0\in\R$ is a constant, and $f_1-if_2$ is an ordinary
holomorphic function of the complex variable $x_1+ix_2$. There are
natural projections of $\M(\Omega)$ onto the subspaces
\begin{align*}
 \Sc\M(\Omega)  &=  \{\Sc f|\ f\in\M(\Omega)\} \subseteq\Har_\R(\Omega) \\
 \Vec\M(\Omega) &=  \{\Vec f|\ f\in\M(\Omega)\}
  \subseteq\Har_{\{0\}\oplus\R^2}(\Omega),
\end{align*}
where $\Har_\R(\Omega)$ denotes the space of real-valued harmonic
functions defined in $\Omega$.  Note that
$\Sc\M(\Omega)=\Sc\overline{\M}(\Omega)$ and
$\Vec\M(\Omega)=-\Vec\overline{\M}(\Omega)$. When $\Omega$ is
simply-connected, $\Sc\M(\Omega)=\Har_\R(\Omega)$.

It is known \cite{Alvarez}, that the real dimension of the
space $\M^{(n)}+\overline{\M}^{(n)}$ of homogeneous ambigenic
polynomials is $4n+4$ when $n\ge1$.  As discussed in the previous
section, the basis polynomials for spheroidal functions are not
homogenous. The dimension of the space
$\M_*^{(n)}+\overline{\M}_*^{(n)}$ of ambigenic polynomials of degree
at most $n$ is
 \begin{align}
  \dim (\M_*^{(n)}+\overline{\M}_*^{(n)})
  =&   \sum_{k=0}^{n}
  \dim(\M^{(k)}+\overline{\M}^{(k)}) \nonumber\\
  =&  3 + \sum_{k=1}^{n}(4k+4)= 2n(n+3)+3.
\end{align}

Observe that we have by \eqref{eq:sphmonogformula} that
\[ \monog{n,n+1}{\pm}[\mu] =
  (n+1)(\harbase{n,n}{\pm}[\mu]e_1\mp\harbase{n,n}{\mp}[\mu]e_2),
\]
and
\[ \harbase{n,n}{\mp}[\mu] = (-1)^n(2n+1)!!\,
(x_1^2+x_2^2)^{n/2}.
\]
The first
equation shows that $\monog{n,n+1}{\pm}[\mu]$ has vanishing scalar
part; i.e.\ they are the negatives of their conjugates. Consequently,
the $\monog{n,n+1}{\pm}[\mu]$ are monogenic constants. This
observation makes it possible to give a basis for the ambigenic
polynomials defined in spheroidal domains. In the following we take
into account the fact that
\begin{lema} For $m\not=0$,
  \[ \langle\monog{k,m}{+}[\mu],\, \antimonog{k,m}{+}[\mu] \rangle_{[\mu]}
  =  \langle\monog{k,m}{-}[\mu],\, \antimonog{k,m}{-}[\mu] \rangle_{[\mu]}.
\]
\end{lema}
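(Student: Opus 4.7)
The plan is to expand both inner products using the explicit representation of $\monog{k,m}{\pm}[\mu]$ from Theorem \ref{th:monogenics} and exploit the orthogonality of the spheroidal harmonics from Theorem \ref{propgarabedian}. The driving observation is that, since $\antimonog{k,m}{\pm}[\mu]=\overline{\monog{k,m}{\pm}[\mu]}$, conjugation preserves the scalar component and negates the $e_1,e_2$ components, so for any $\R^3$-valued function $q$ we have
\[
 \langle q,\bar q\rangle_{[\mu]} \;=\; \|\Sc q\|_{[\mu]}^2 \;-\; \|\Vec q\|_{[\mu]}^2 .
\]
Hence it is enough to prove that both $\|\Sc \monog{k,m}{\pm}[\mu]\|_{[\mu]}^2$ and $\|\Vec \monog{k,m}{\pm}[\mu]\|_{[\mu]}^2$ are independent of the choice of sign, for $m\neq 0$.

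First I would dispatch the scalar parts. By \eqref{eq:sphmonogformula}, $\Sc\monog{k,m}{\pm}[\mu]=\harbase{k,m}{\pm}[\mu]=\harbaseq{k,m}[\mu]\Phi_m^\pm$, where the $u,v$-factor is the same for $+$ and $-$. For $m\neq 0$ one has $\int_0^{2\pi}\cos^2(m\phi)\,d\phi=\int_0^{2\pi}\sin^2(m\phi)\,d\phi=\pi$, so $\|\harbase{k,m}{+}[\mu]\|_{[\mu]}=\|\harbase{k,m}{-}[\mu]\|_{[\mu]}$.

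For the vector part, I would read off from \eqref{eq:sphmonogformula} that
\begin{align*}
 [\Vec\monog{k,m}{+}]_1 &= \tfrac12\bigl((k{+}m{+}1)\harbase{k,m-1}{+}-\tfrac{1}{k+m+2}\harbase{k,m+1}{+}\bigr),\\
 [\Vec\monog{k,m}{+}]_2 &= -\tfrac12\bigl((k{+}m{+}1)\harbase{k,m-1}{-}+\tfrac{1}{k+m+2}\harbase{k,m+1}{-}\bigr),
\end{align*}
and the expressions for $\Vec\monog{k,m}{-}$ are obtained by swapping every superscript $+\leftrightarrow-$. Since $\harbase{k,m-1}{\varepsilon}$ and $\harbase{k,m+1}{\varepsilon}$ (same $\varepsilon=\pm$) are orthogonal in $L_2(\Omega_\mu)$ by Theorem \ref{propgarabedian}, the cross terms drop out when computing $\|\Vec\monog{k,m}{+}\|_{[\mu]}^2$, leaving a sum of four terms involving $\|\harbase{k,m\pm 1}{+}\|^2_{[\mu]}$ and $\|\harbase{k,m\pm 1}{-}\|^2_{[\mu]}$ with identical coefficients. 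The analogous computation for $\Vec\monog{k,m}{-}$ yields the \emph{same} expression, because swapping $+\leftrightarrow -$ in the superscripts of the harmonics only permutes the four squared norms entering the sum.

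Combining both pieces gives $\langle\monog{k,m}{+}[\mu],\antimonog{k,m}{+}[\mu]\rangle_{[\mu]}=\langle\monog{k,m}{-}[\mu],\antimonog{k,m}{-}[\mu]\rangle_{[\mu]}$. The only delicate points to watch are: (i) the case $m=1$, where the index $m-1=0$ makes $\harbase{k,0}{-}[\mu]\equiv 0$ but $\harbase{k,0}{+}[\mu]\neq 0$ — this does not obstruct the argument because the $\pm$-symmetric sum $\|\harbase{k,0}{+}\|^2+\|\harbase{k,0}{-}\|^2$ still appears identically in both inner products; and (ii) bookkeeping of signs under conjugation, which is the main source of potential error but is otherwise routine.
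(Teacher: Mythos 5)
Your proposal is correct and takes essentially the same route as the paper's proof: both reduce the claim to $\langle X,\overline{X}\rangle_{[\mu]}=\|\Sc X\|_{[\mu]}^2-\|\Vec X\|_{[\mu]}^2$, expand via \eqref{eq:sphmonogformula}, and use that $\int_0^{2\pi}\cos^2(m\phi)\,d\phi=\int_0^{2\pi}\sin^2(m\phi)\,d\phi$ for $m\neq0$ (the paper keeps the cross terms inside squared combinations rather than invoking Theorem \ref{propgarabedian} to drop them, which is an inessential difference). One harmless imprecision: passing from $\Vec\monog{k,m}{+}$ to $\Vec\monog{k,m}{-}$ swaps the superscripts \emph{and} flips the sign of the $e_2$ component, but since only squares enter the norms this does not affect your argument.
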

\begin{proof}
Indeed,
\begin{align*}
  \langle & \monog{k,m}{+}[\mu],\, \antimonog{k,m}{+}[\mu] \rangle_{[\mu]}
   = \int_{\Omega_\mu} \left(\left[\monog{k,m}{+}[\mu]\right]_0^2
   - \left[\monog{k,m}{+}[\mu]\right]_1^2-\left[\monog{k,m}{+}[\mu]\right]_2^2 \right) dx \\
  =& \int_{0}^{\pi}\int_{0}^{1/\mu}(\harbaseq{k,m}[\mu])^2dvdu\int_{0}^{2\pi}\cos^2m\phi\,d\phi\\
 & - \frac{1}{4}\int_{0}^{\pi}\int_{0}^{1/\mu}\big((k+m+1)\harbaseq{k,m-1}[\mu]
   - \frac{1}{k+m+2}\harbaseq{k,m+1}[\mu]\big)^2 \,dv\,du\\
 & \times   \int_{0}^{2\pi}\cos^2m\phi \,d\phi\\
 & - \frac{1}{4}\int_{0}^{\pi}\int_{0}^{1/\mu}\big( (k+m+1)\harbaseq{k,m-1}[\mu]
   - \frac{1}{k+m+2}\harbaseq{k,m+1}[\mu]\big)^2 \,dv\,du\\
 & \times  \int_{0}^{2\pi}\sin^2(m\phi)d\phi.
\end{align*}
Since $m\neq0$, the two values  $\int_{0}^{2\pi}\Phi_m^\pm(\phi)^2\,d\phi$
are equal,
and therefore
\begin{align*}
\langle\monog{k,m}{+}[\mu],\antimonog{k,m}{+}[\mu]\rangle_{[\mu]}  = &
\int_{\Omega_\mu} \left(\left[\monog{k,m}{-}[\mu]\right]_0^2-\left[\monog{k,m}{-}[\mu]\right]_1^2-\left[\monog{k,m}{-}[\mu]\right]_2^2 \right)dx
\\  = & \langle\monog{k,m}{-}[\mu],\antimonog{k,m}{-}[\mu]\rangle_{[\mu]}.
\end{align*}
\end{proof}

It is not possible to extract from the list $\{\monog{n,m}{\pm},\  \antimonog{n,m}{\pm}\}$
an orthogonal basis of ambigenic functions, but only a small modification is necessary.
  Define the functions
 \begin{align*}
  \ambi{n,m}{++}[\mu]  = & \monog{n,m}{+}[\mu], \\
  \ambi{n,m}{-+}[\mu] = & \monog{n,m}{-}[\mu], \\
  \ambi{n,m}{+-}[\mu]  = &   \antimonog{n,m}{+}[\mu]
    - \coefiii{n,m}[\mu]\monog{n,m}{+}[\mu], \\
  \ambi{n,m}{--}[\mu]  = &  \antimonog{n,m}{-}[\mu]
    - \coefiii{n,m}[\mu]\monog{n,m}{-}[\mu], ,
\end{align*}
where
\[ \coefiii{n,m}[\mu] = \left\{\begin{array}{cc}
      \frac{\langle\monog{n,m}{+}[\mu],\antimonog{n,m}{+}[\mu]\rangle_{[\mu]}}{\|\monog{n,m}{+}[\mu]\|^2_{[\mu]}},
        & \text{ if }0\leq m\leq n, \\[1ex]
      0, & \text{ if }m=n+1,  \end{array} \right.
\]
and
\begin{align*}
\ambi{0,m}{++}[\mu] = & \monog{0,m}{+}[\mu] \text{ for } m=0,1,\\
\ambi{0,1}{-+}[\mu] = & \monog{0,1}{-}[\mu].
\end{align*}

\begin{prop} \label{prop:ambibasis}
The collection of $2n(n+3)+3$ polynomials
\begin{align*}
  \{& \ambi{k,m}{++} :\ 0\le m\le k+1\} \cup
   \{   \ambi{k,m}{-+}:\  0\le m\le k\}   \\
& \cup  \{   \ambi{k,m}{+-}:\  0\le m\le k\} \cup
   \{ \ambi{k,m}{--}:\ 0\le m\le k+1\} ,
\end{align*}
$0\le k\le n$, is an orthogonal basis in $L_2(\Omega_\mu)$ for the
subspace of ambigenic polynomials of degree at most $n$.
\end{prop}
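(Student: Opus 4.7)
The strategy is to prove three claims: (i) every $\ambi{k,m}{\sigma\tau}[\mu]$ listed belongs to $\M_*^{(n)} + \overline{\M}_*^{(n)}$; (ii) the total number of listed polynomials equals $2n(n+3)+3$, matching the dimension computed before the proposition; and (iii) the family is pairwise orthogonal with respect to $\langle\cdot,\cdot\rangle_{[\mu]}$. Orthogonality gives linear independence, which combined with the correct cardinality forces the family to span the space of ambigenic polynomials of degree at most $n$, while (i) keeps the construction inside that space.

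Claims (i) and (ii) are direct. Each $\ambi{k,m}{++}[\mu]$ and $\ambi{k,m}{-+}[\mu]$ is a basic monogenic polynomial of degree $k$, and each $\ambi{k,m}{+-}[\mu]$ and $\ambi{k,m}{--}[\mu]$ is an antimonogenic polynomial corrected by a scalar multiple of a monogenic one; hence all are ambigenic of degree at most $n$. Counting the stated index ranges and treating $k=0$ separately (where only the three degree-zero monogenic constants $\monog{0,0}{+}[\mu]$, $\monog{0,1}{+}[\mu]$, $\monog{0,1}{-}[\mu]$ survive) yields the tally $2n(n+3)+3$, in agreement with the dimension formula derived above.

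For (iii) I would carry out a case analysis. Pairs with both factors of type $++$ or $-+$ are pure monogenic inner products and are disposed of immediately by Theorem \ref{th:monogbase}. Since the map $f\mapsto\overline{f}$ is a real $L_2$-isometry on $\R^3$-valued functions that interchanges monogenicity and antimonogenicity, the analogous orthogonalities among the $\antimonog{k,m}{\pm}[\mu]$ hold automatically. A mixed inner product such as $\langle \ambi{k_1,m_1}{+-}[\mu],\,\ambi{k_2,m_2}{++}[\mu]\rangle_{[\mu]}$ expands by bilinearity to
\[
\langle \antimonog{k_1,m_1}{+}[\mu], \monog{k_2,m_2}{+}[\mu]\rangle_{[\mu]} - \coefiii{k_1,m_1}[\mu]\,\langle \monog{k_1,m_1}{+}[\mu], \monog{k_2,m_2}{+}[\mu]\rangle_{[\mu]},
\]
whose second summand vanishes for distinct indices by Theorem \ref{th:monogbase} and cancels the first for matching indices by the very definition of $\coefiii{k,m}[\mu]$; the lemma preceding the proposition guarantees that the same $\coefiii{k,m}[\mu]$ works symmetrically for the $-$ family. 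Pairings among the $+-$ and $--$ families unfold similarly into four-term expressions of the same character.

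The main obstacle is the cross-term identity
\[
\langle \monog{k_1,m_1}{\sigma}[\mu], \antimonog{k_2,m_2}{\tau}[\mu]\rangle_{[\mu]} = 0 \quad\text{whenever } (k_1,m_1)\neq(k_2,m_2) \text{ or } \sigma\neq\tau,
\]
which is needed for the expansions above but is not stated explicitly earlier. Because conjugation merely negates the vector part of a monogenic polynomial, this inner product differs from $\langle \monog{k_1,m_1}{\sigma}[\mu], \monog{k_2,m_2}{\tau}[\mu]\rangle_{[\mu]}$ only in the signs of the two vector-component integrals; each of the three component integrals was shown inside the proof of Theorem \ref{th:monogbase} to vanish for distinct indices via the $\phi$-orthogonalities of the $\Phi^\pm_m$ and the Legendre orthogonality of the $\harbaseq{n,m}[\mu]$, while the $\sigma\neq\tau$ case reduces at the outset to $\int_0^{2\pi}\Phi^+_m(\phi)\Phi^-_{m'}(\phi)\,d\phi=0$. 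Once this cross-term lemma is in hand, every pairwise inner product reduces as claimed and the proposition follows.
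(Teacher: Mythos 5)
Your proposal is correct and follows essentially the same route as the paper: a dimension count plus pairwise orthogonality, with mixed terms reduced by bilinearity to the cross products $\langle \monog{k_1,m_1}{\pm},\antimonog{k_2,m_2}{\pm}\rangle_{[\mu]}$, which vanish for distinct indices by the component-wise computations in the proof of Theorem \ref{th:monogbase} (scalar part minus vector part) and cancel for equal indices by the definition of $\coefiii{k,m}[\mu]$, with the preceding lemma handling the ``$-$'' family. The cross-term identity you flag as unstated is exactly what the paper derives at the corresponding point of its proof, so no gap remains.
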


\begin{proof}
  Throughout this proof, in view of the fact that $\mu$ is fixed, we
  simply write $\monog{k,m}{\pm}$, $\ambi{k,m}{\pm\pm}$,
   $\coefiii{k,m}$ for
  $\monog{k,m}{\pm}[\mu]$, $\ambi{k,m}{\pm,\pm}[\mu]$,
   $\coefiii{k,m}[\mu]$.
    Since there are $2n(n+3)+3$ ambigenic functions in the given list, it
  suffices to prove the orthogonality to conclude that they generate the
  ambigenic polynomials. Because the set
\[ \{\monog{k,0}{+},\monog{k,m}{+},\monog{k,m}{-}|\ k=0,\dots,n,\ m=1,\dots,k+1\}
\]
is an orthogonal basis of $\M_*^{(n)}$ in $\Omega_\mu$, it follows
at once that
\[   \langle\ambi{k,m}{++},\,\overline{\ambi{k,m}{-+}} \rangle_{[\mu]}
   = \langle\ambi{k,m}{++},\,\overline{\ambi{k,m}{--}} \rangle_{[\mu]}
   = \langle\ambi{k,m}{+-},\,\overline{\ambi{k,m}{-+}} \rangle_{[\mu]}
   = \langle\ambi{k,m}{+-},\,\overline{\ambi{k,m}{--}} \rangle_{[\mu]}
   =0.
\]
Since
\begin{align*}
  \langle \ambi{k_1,m_1}{+-},\, \ambi{k_2,m_2}{+-}\rangle_{[\mu]}
 = &  \langle \antimonog{k_1,m_1}{+}-\coefiii{k_1,m_1}\monog{k_1,m_1}{+},
 \  \antimonog{k_2,m_2}{+}-\coefiii{k_2,m_2}\monog{k_2,m_2}{+}\rangle_{[\mu]}\\
 = & \langle \antimonog{k_1,m_1}{+},\ \antimonog{k_2,m_2}{+}\rangle_{[\mu]}
  - \coefiii{k_2,m_2}\langle \antimonog{k_1,m_1}{+}, \
        \monog{k_2,m_2}{+}\rangle_{[\mu]}\\
  & - \coefiii{k_1,m_1}\langle\monog{k_1,m_1}{+},\
    \antimonog{k_2,m_2}{+}\rangle_{[\mu]} \\
  & + \coefiii{k_1,m_1}\coefiii{k_2,m_2}
     \langle\monog{k_1,m_1}{+},\monog{k_2,m_2}{+}\rangle_{[\mu]},
\end{align*}
it will be enough to study
$\langle \antimonog{k_1,m_1}{+},\monog{k_2,m_2}{+}\rangle_{[\mu]}$
and
$\langle\monog{k_1,m_1}{+},\,\antimonog{k_2,m_2}{+}\rangle_{[\mu]}$:
\begin{align*}
 \langle \antimonog{k_1,m_1}{+},\monog{k_2,m_2}{+}\rangle_{[\mu]}
 = &  \int_{\Omega_\mu}\big(\left[\monog{k_1,m_1}{+}\right]_0\left[\monog{k_2,m_2}{+}\right]_0 \\
& - \big(\left[\monog{k_1,m_1}{+}\right]_1\left[\monog{k_2,m_2}{+}\right]_1
  +\left[\monog{k_1,m_1}{+}\right]_2\left[\monog{k_2,m_2}{+}\right]_2 )\big) \,dx,
\end{align*}
but from the proof of Proposition \ref{th:monogbase}, we obtain that
\[   \langle \antimonog{k_1,m_1}{+},\monog{k_2,m_2}{+}\rangle_{[\mu]}
  =  (\|\Sc\monog{k_1,m_1}{+}\|^2_{[\mu]} -
    \|\Vec\monog{k_1,m_1}{+}\|^2_{[\mu]} )\delta_{k_1,k_2}\delta_{m_1,m_2}.
\]
Now we note that
\[ \langle\ambi{k_1,m_1}{++},\ambi{k_2,m_2}{+-}\rangle_{[\mu]}
  =\langle\monog{k_1,m_1}{+},\antimonog{k_2,m_2}{+}-
  \coefiii{k_2,m_2}\monog{k_2,m_2}{+}\rangle_{[\mu]}.
\]
By the above observations, these functions are orthogonal when
$k_1\neq k_2$ or $m_1\neq m_2$, and when the indices coincide,
\[ \langle\ambi{k,m}{++},\ambi{k,m}{+-}\rangle_{[\mu]}=
 \langle\monog{k,m}{+},\antimonog{k,m}{+}\rangle_{[\mu]}-
  \frac{\langle\monog{k,m}{+},\
 \antimonog{k,m}{+}\rangle_{[\mu]}}{\|\monog{k,m}{+}\|^2_{[\mu]}}
 \langle\monog{k,m}{+},\monog{k,m}{+}\rangle_{[\mu]}=0.
\]
Moreover, by the orthogonality of the system
$\{\Phi_k^+,\Phi_l^-|\ k\geq0,\ l>0\}$, it is clear that
$\langle\ambi{k_1,m_1}{++},\ambi{k_2,m_2}{--}\rangle_{[\mu]}=0$,
and further
 $\langle\ambi{k,m}{++},\ambi{k,m}{--}\rangle_{[\mu]}=0$. Finally,
\[  \langle\ambi{k,m}{-+},\ambi{k,m}{--}\rangle_{[\mu]} =
  \langle\monog{k,m}{-},\ \antimonog{k,m}{-}\rangle_{[\mu]} -
  \frac{\langle\monog{k,m}{+},\
    \antimonog{k,m}{+}\rangle_{[\mu]}}{\|\monog{k,m}{+}\|^2_{[\mu]}}\|\monog{k,m}{-}\|^2_{[\mu]}.
\]
Note that
\[ \langle\monog{k,m}{-},\ \antimonog{k,m}{-}\rangle_{[\mu]}
  =\langle\monog{k,m}{+},\ \antimonog{k,m}{+}\rangle_{[\mu]}
\]
and $\|\monog{k,m}{-}\|^2_{[\mu]}=\|\monog{k,m}{+}\|^2_{[\mu]}$,
when $m\neq0$. Therefore
$ \langle\ambi{k,m}{-+},\ambi{k,m}{--}\rangle_{[\mu]}=0$.
\end{proof}

It can shown, with more work, that $\monog{n,m}{\pm}$ are (up to
rescaling) the same polynomials defined in \cite{Morais2} (cf. \cite{MoNgK}); we will not
need this fact here.


\section{Spheroidal contragenic  functions\label{sec:contragenics}}

We now come to our main subject.  It is well known that every
$\C$-valued harmonic function in a simply connected domain in the
complex plane $\C$ is expressible as the sum of a holomorphic function
and an antiholomorphic function; these two elements are unique up to a
constant summand.  There are many generalizations of this fact for
monogenic functions on quaternions \cite{Sudbery} and Clifford
algebras \cite{Brackx}. A similar result for monogenic functions from
$\R^3\to\H$ is given in \cite{Cacao3}. However, it was
discovered in \cite{Alvarez} by a dimension count that the
corresponding statement for monogenic functions $\R^3\to\R^3$ does not
hold, due to the fact that the multiplication in $\R^3$ is not a
closed operation in $\H$. In other words, there are harmonic functions
which are not expressible as the sum of a monogenic and an
antimonogenic function.

We summarize the dimensions over $\R$ of the relevant spaces of
polynomials in Table \ref{tab:dimensions}. The subscript $*$ refers
to polynomials of degree at most $n$.
\begin{table}[ht!]
\[ \begin{array}{|c|c|}
  \hline
 \mbox{Space of polynomials} & \dim_\R  \\[.5ex] \hline
  \s \Har_*^{(n)}(\R) & (n+1)^2  \\[.5ex] \hline
  \s \Har_*^{(n)}(\R^3) & 3(n+1)^2 \\[.5ex] \hline
  \s \M_*^{(n)},\ \overline{\M}_*^{(n)} & (n+3)(n+1) \\[.5ex] \hline
 \s  \M_*^{(n)}\cap\overline{\M}_*^{(n)} & 2n+3 \\[.5ex] \hline
 \s  \M_*^{(n)}+\overline{\M}_*^{(n)}& 2(n^2+3n+1)+1 \\[.5ex] \hline
\end{array}
\]
  \caption{Dimensions of spaces of polynomials ($n\geq0$).}
  \label{tab:dimensions}
\end{table}


\subsection{Spheroidal contragenic polynomials}

One way to quantify the failure of a harmonic function to be ambigenic
is via orthogonal complements. We will write
\[ \M_2(\Omega)=\M(\Omega)\cap L_2(\Omega).
\]
Since scalar-valued (i.e.\ $\R e_0$-valued) functions are by
definition orthogonal in $L_2(\Omega)$ to functions which take values in
$\R e_1+\R e_2$, there is a natural orthogonal direct sum
decomposition of the space of square-integrable ambigenic functions,
namely
\begin{equation}\label{eq:ambigenicspace}
\M_2(\Omega) + \overline{\M}_2(\Omega) =
    \Sc\M_2(\Omega) \oplus \Vec\M_2(\Omega).
  \end{equation}

In any domain $\Omega$, a harmonic function
$h\in\Har(\Omega)\cap L_2(\Omega)$ is called $\Omega$-\textit{contragenic} when
it is orthogonal to all square-integrable ambigenic functions, that
is, if it lies in
\[ \N(\Omega)=(\M_2(\Omega)+\overline{\M}_2(\Omega))^\bot,
\]
where the orthogonal complement is taken in
$\Har(\Omega)\cap L_2(\Omega)$. Let
$\N^{(n)}(\Omega)\subset \N(\Omega)$ denote the subspace of
contragenic polynomials of degree $n$, and let
$\N_*^{(n)}(\Omega)\subset \N(\Omega)$ be the subspace of polynomials of degree
$\le n$. Unlike the spaces of harmonic, monogenic, antimonogenic and
ambigenic polynomials, the definition of $\N^{(n)}(\Omega)$ and
$\N_*^{(n)}(\Omega)$ involves the $L_2$ inner product and thus
depends on $\Omega$.

We now return to spheroids, and write
$\N_*^{(n)}[\mu]=\N_*^{(n)}(\Omega_\mu)$.  Let $n\geq1$. In
\cite{Alvarez}, it was proved that for $n\ge1$, the homogeneous
polynomials of degree $n$ which are contragenic on the sphere
$\Omega_0$ form a space of dimension $2n-1$.  It was also observed
when an $\R$-valued harmonic homogeneous polynomial is completed as
the scalar part of a monogenic function (unique up to adding a
monogenic constant), the vector part can also be taken to be a
homogeneous polynomial of the same degree.

Since the spheroidal harmonics and monogenics are not homogeneous, it
is preferable to combine the dimensions up to $n$;
for the sphere we have
$\dim \N_*^{(n)}[0]=n^2$.  Since the dimension of an orthogonal
complement within a fixed vector space does not depend on the inner
product used, and since the harmonic and the ambigenic polynomials of
degree $\le n$ do not depend on the domain, it is clear that we have
in general
\[ \dim \N_*^{(n)}[\mu]=n^2.
\]

We now give an explicit construction of a basis of the $\N_*^{(n)}$,
using as building blocks the components of the monogenic functions.
Write
\begin{equation}\label{eq:coefiv}
  \coefiv{n,m}[\mu] =
  \frac{ \|\harbase{n,m+1}{+}[\mu]\|^2_{[\mu]}}
       {(n+m+1)(n+m+2)^2 \|\harbase{n,m-1}{+}[\mu] \|^2_{[\mu]}}.
\end{equation}

\begin{defi} Let $n\ge1$.   The
\textit{basic contragenic polynomials} for $\Omega_\mu$ are
\begin{align*}
   \contra{n,m}{\pm}[\mu] =& \frac{\coefiv{n,m}}{n+m+1}
    \big( \Vec\monog{n,m}{\mp}[\mu] \mp \monog{n,m}{\pm}[\mu]e_3 \big)
    \pm [\monog{n,m}{\pm}[\mu]]_0e_3 ]\\
     & +  \big(-\Vec \monog{n,m}{\mp}[\mu]
    \mp\monog{n,m}{\pm}[\mu]e_3 \pm [\monog{n,m}{\pm}[\mu]]_0 e_3 \big)
\end{align*}
for  $1\leq m\leq n-1$, while for $m=0$ we define
\[ \contra{n,0}{}[\mu] =
\frac{1}{n+2}\left(\harbase{n,1}{-}[\mu]e_1-\harbase{n,1}{+}[\mu]e_2\right).
\]
\end{defi}

Examples are given in Table \ref{tab:firstcontragenics}.
In what follows, we will continue to write $\monog{n,m}{\pm}$,
$\contra{n,m}{\pm}$, $\coefiv{n,m}$ in place of
$\monog{n,m}{\pm}[\mu]$, $\contra{n,m}{\pm}[\mu]$,
$\coefiv{n,m}[\mu]$ when $\mu$ is fixed. The basic contragenic
polynomials may be expressed in terms of their $e_1$, $e_2$ components
by defining
\begin{equation}\label{eq:Anm}
   \coefv{n,m}{\pm}=\frac{\coefiv{n,m} \pm (n+m+1)}{n+m+1};
\end{equation}
then
\begin{align}
  \contra{n,m}{\pm}
    =&  (\coefv{n,m}{-}[\monog{n,m}{\mp}]_1
   \mp \coefv{n,m}{+} [\monog{n,m}{\pm}]_2 )e_1  \nonumber\\
  & \quad + (\coefv{n,m}{-}\left[\monog{n,m}{\mp}\right]_2\pm
  \coefv{n,m}{+}\left[\monog{n,m}{\pm}\right]_1)e_2 .
 \end{align}

  Also, from
\[
\monog{n,m}{\pm}[\mu]e_3=\left[\monog{n,m}{\pm}[\mu]\right]_2e_1-\left[\monog{n,m}{\pm}[\mu]\right]_1e_2+\left[\monog{n,m}{\pm}[\mu]\right]_0e_3.
\]
one sees that
\begin{align}\label{eq:contracoords}
  \contra{n,m}{\pm} =&
  \big(\coefiv{n,m}\harbase{n,m-1}{\mp}
    +\frac{1}{n+m+2}\harbase{n,m+1}{\mp}\big)e_1 \nonumber\\
  &\pm \big( \coefiv{n,m} \harbase{n,m-1}{\pm} -
     \frac{1}{n+m+2} \harbase{n,m+1}{\pm} \big)e_2.
\end{align}
 
\begin{table}[htbp!]
\[ \begin{array}{|c|c|l|}
\hline
n & m & \s\contra{n,m}{\pm}\\
\hline
1 & 0 & \s\contra{1,0}{}=-x_2e_1+x_1e_2\\[.5ex]
\cline{1-3}
\multirow{7}{*}{2} & 0 & \contra{2,0}{}=-3x_0x_2e_1+3x_0x_1e_2\\[.5ex]
\cline{2-3}
& 1 & \st\contra{2,1}{+}=6x_1x_2e_1+\dfrac{3}{30-20\mu^2+6\mu^4}\big(25x_2^2-2\mu^2-10x_2^2\mu^2\\
& &\qquad\quad +4\mu^4+x_2^2\mu^4-2\mu^6+ 10x_0^2(-1+\mu^2)^2\\
& &\qquad\quad  +x_1^2(-35+30\mu^2-11\mu^4)\big)e_2\\
& & \s\contra{2,1}{-}=\dfrac{3}{30-20\mu^2+6\mu^4}\big(-35x_2^2-2\mu^2+30x_2^2\mu^2+4\mu^4\\
& &\qquad\quad  -11x_2^2\mu^4-2\mu^6+x_1^2(-5+\mu^2)^2\\
& &\qquad\quad  +10x_0^2(-1+\mu^2)^2\big)e_1+6x_1x_2e_2\\[.5ex]
\cline{1-3}
\multirow{16}{*}{3} & & \s\contra{3,0}{}=\frac{3}{14}x_2(-28x_0^2+7x_1^2+7x_2^2+4\mu^2)e_1\\
& 0 &\qquad\quad  -\frac{3}{14}x_1(-28x_0^2+7x_1^2+7x_2^2+4\mu^2)e_2\\[.5ex]
\cline{2-3}
& & \st\contra{3,1}{+}=30x_0x_1x_2e_1+\dfrac{15x_0}{70-84\mu^2+30\mu^4}\big(49x_2^2\\
& &\qquad\quad  -6\mu^2-42x_2^2\mu^2+12\mu^4+9x_2^2\mu^4-6\mu^6+14x_0^2(-1+\mu^2)^2\\
& 1 &\qquad\quad  +x_1^2(-91+126\mu^2-51\mu^4)\big)e_2\\
& & \s\contra{3,1}{-}=\dfrac{15x_0}{70-84\mu^2+30\mu^4}\big(-91x_2^2-6\mu^2\\
& &\qquad\quad  +126x_2^2\mu^2+12\mu^4-51x_2^2\mu^4-6\mu^6+x_1^2(7-3\mu^2)^2\\
& &\qquad\quad  +14x_0^2(-1+\mu^2)^2\big)e_1+30x_0x_1x_2e_2\\[.5ex]
\cline{2-3}
& & \st\contra{3,2}{+}=-\dfrac{30x_2}{35-14\mu^2+3\mu^4}\big(-21x_2^2-2\mu^2+14x_2^2\mu^2+4\mu^4\\
& &\qquad\quad  -5x_2^2\mu^4-2\mu^6+x_1^2(-7+\mu^2)^2+14x_0^2(-1+\mu^2)^2\big)e_1\\
& &\qquad\quad  -\dfrac{30x_1}{35-14\mu^2+3\mu^4}\big(49x_2^2-2\mu^2-14x_2^2\mu^2+4\mu^4+x_2^2\mu^4\\
& &\qquad\quad  -2\mu^6+14x_0^2(-1+\mu^2)^2+x_1^2(-21+14\mu^2-5\mu^4)\big)e_2\\
& 2 & \s\contra{3,2}{-}=\dfrac{60x_1}{35-14\mu^2+3\mu^4}\big(28x_2^2+\mu^2-14x_2^2\mu^2-2\mu^4+4x_2^2\mu^4\\ & &\qquad\quad  +\mu^6-7x_0^2(-1+\mu^2)^2+x_1^2(-7+\mu^4)\big)e_1\\
& &\qquad\quad  -\dfrac{60x_2}{35-14\mu^2+3\mu^4}\big(-7x_2^2+\mu^2-2\mu^4+x_2^2\mu^4\\
& &\qquad\quad  +\mu^6-7x_0^2(-1+\mu^2)^2+2x_1^2(14-7\mu^2+2\mu^4)\big)e_2\\[.5ex]
 \hline
\end{array} \]
\caption{Spheroidal Contragenic Polynomials of Low Degree.}
  \label{tab:firstcontragenics}
  \end{table}

\begin{theo}\label{th:contragenicbasis}
  The $n^2$ functions
  $\{\contra{k,m}{\pm}[\mu], \contra{k,0}{}[\mu]\}$
  ($1\leq k\leq n$, $1\leq m\leq k-1$) form an orthogonal basis for
  $\N_*^{(n)}[\mu]$.
\end{theo}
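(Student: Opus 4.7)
The dimension formula $\dim\N_*^{(n)}[\mu]=n^2$ derived before the statement matches the cardinality of the proposed family: at each degree $k\in\{1,\dots,n\}$ there is one polynomial $\contra{k,0}[\mu]$ and $2(k-1)$ polynomials $\contra{k,m}{\pm}[\mu]$ with $1\le m\le k-1$, giving $\sum_{k=1}^{n}(2k-1)=n^2$. Hence it suffices to verify (a) each listed polynomial is harmonic and orthogonal in $L_2(\Omega_\mu,\R^3)$ to every monogenic and antimonogenic polynomial (so is contragenic), and (b) the family is pairwise orthogonal.

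The whole argument is run from the coordinate expansion \eqref{eq:contracoords}, which writes each basic contragenic polynomial as an explicit $\R^3$-valued combination of the spheroidal harmonics $\harbase{k,m\pm1}{\pm}[\mu]$ on $e_1,e_2$. Harmonicity then follows from Theorem \ref{propgarabedian}, and since each $\contra{k,m}{\pm}[\mu]$ has no scalar part, it is automatically orthogonal to the scalar parts of any monogenic or antimonogenic polynomial. For the vector parts, expand $\Vec\monog{k',m'}{\pm}[\mu]$ via Theorem \ref{th:monogenics}; the relevant inner products then reduce to linear combinations of quantities $\langle\harbase{a,b}{\pm}[\mu],\harbase{a',b'}{\pm}[\mu]\rangle_{[\mu]}$. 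By Theorem \ref{propgarabedian} and the parity orthogonality of $\Phi_m^+,\Phi_m^-$ over $[0,2\pi]$, such a product vanishes unless $a=a'$, $b=b'$, and the parities coincide. Tracking the parities in \eqref{eq:sphmonogformula} and \eqref{eq:contracoords}, one sees that the matched-parity inner product $\langle\contra{k,m}{\pm}[\mu],\monog{k',m'}{\pm}[\mu]\rangle_{[\mu]}$ vanishes term by term, because the $e_1$- and $e_2$-components carry opposite parities in the two factors. The opposite-parity inner product $\langle\contra{k,m}{\pm}[\mu],\monog{k',m'}{\mp}[\mu]\rangle_{[\mu]}$ contributes only when $k=k'$ and $m'\in\{m-2,m,m+2\}$; the outer two cases cancel in pairs between the $e_1$- and $e_2$-integrals using $\|\harbase{k,b}{+}[\mu]\|_{[\mu]}=\|\harbase{k,b}{-}[\mu]\|_{[\mu]}$ for $b\ge1$, while the central case $m'=m$ vanishes precisely because of the defining identity \eqref{eq:coefiv} of $\coefiv{k,m}[\mu]$. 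Orthogonality to antimonogenics follows at once from $\antimonog{k',m'}{\pm}[\mu]=\overline{\monog{k',m'}{\pm}[\mu]}$, which negates only the vector part. The edge case $m=0$ is treated identically after using \eqref{e:relationharsubinneg} to rewrite $\harbase{k,-1}{\pm}[\mu]$ in terms of $\harbase{k,1}{\pm}[\mu]$.

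Part (b), pairwise orthogonality among the $\contra{k,m}{\pm}[\mu]$'s themselves, is proved by the same template: substitute \eqref{eq:contracoords} into $\langle\contra{k,m}{\pm}[\mu],\contra{k',m'}{\pm}[\mu]\rangle_{[\mu]}$ and apply Theorem \ref{propgarabedian}; off-diagonal terms vanish and the diagonal norms are visibly nonzero, so the family is also linearly independent. The main obstacle throughout is the sign- and index-bookkeeping of the opposite-parity case $\langle\contra{k,m}{+}[\mu],\monog{k,m}{-}[\mu]\rangle_{[\mu]}$, where the four surviving cross-terms in $e_1,e_2$ must be combined with correct signs; the precise ratio of norms appearing in \eqref{eq:coefiv} is forced exactly by this cancellation, and once it is in hand the remaining identities are mechanical consequences of harmonic orthogonality and the parity relations for $\Phi_m^\pm$.
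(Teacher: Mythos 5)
Your proposal is correct and follows essentially the same route as the paper: reduce to showing contragenicity and mutual orthogonality of the $n^2$ listed polynomials (the dimension count $\dim\N_*^{(n)}[\mu]=n^2$ having been established beforehand), then verify orthogonality to $\Vec\M_*^{(n)}$ via \eqref{eq:contracoords}, Theorem \ref{propgarabedian}, the parity relations for $\Phi_m^\pm$, the equality $\|\harbase{k,b}{+}\|_{[\mu]}=\|\harbase{k,b}{-}\|_{[\mu]}$, and the defining relation \eqref{eq:coefiv} for the diagonal case, with antimonogenics handled by conjugation. Your explicit treatment of the $m'=m\pm2$ cross-terms is exactly the cancellation the paper subsumes under ``straightforward computations,'' so the two arguments coincide in substance.
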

\begin{proof}
  First we prove that $\contra{k,0}{}[\mu]$ and
  $\contra{k,m}{\pm}[\mu]$ are indeed contragenic. As they have no
  scalar parts, it suffices to show that they are orthogonal to
  $\Vec\M_*^{(n)}$. To do this, we use the basis obtained by dropping
  the scalar parts of the basis for $\M_*^{(n)}$ given in Theorem
  \ref{th:monogbase}. Since
\[ \{\Phi_{m_1}^+,\ \Phi_{m_2}^-|\ m_1\geq0,m_2\geq1\}
\]
is a system of orthogonal functions in $[0,\pi]$, then
when $1\leq m_1\leq k_1$ and $1\leq m_2\leq k_2$,
it is clear that
\[  \langle\contra{k_1,m_1}{+}, \, \Vec\monog{k_2,m_2}{+} \rangle_{[\mu]}
  =\langle\contra{k_1,m_1}{-}, \, \Vec\monog{k_2,m_2}{-} \rangle_{[\mu]}=0.
\]
On the other hand, when $m_1>0$ and $m_2\geq0$,  we have that
\begin{align*}
 \langle\contra{k_1,m_1}{\pm}, \,
   \Vec \monog{k_2,m_2}{\mp} \rangle_{[\mu]}
= &  \coefv{k_1,m_1}{-}\int_{\Omega_\mu}\left[\monog{k_1,m_1}{\mp}\right]_1
 \left[\monog{k_2,m_2}{\mp}\right]_1 \,dx\\
& \mp \coefv{k_1,m_1}{+}\int_{\Omega_\mu}\left[\monog{k_1,m_1}{\pm}\right]_2
   \left[\monog{k_2,m_2}{\mp}\right]_1 \,dx\\
&  +  \coefv{k_1,m_1}{-}\int_{\Omega_\mu}\left[\monog{k_1,m_1}{\mp}\right]_2
   \left[\monog{k_2,m_2}{\mp}\right]_2 \,dx\\
& \pm \coefv{k_2,m_2}{+}\int_{\Omega_\mu}\left[\monog{k_1,m_1}{\pm}\right]_1
   \left[\monog{k_2,m_2}{\mp}\right]_2 \,dx
\end{align*}
where $\coefv{k,m}{\pm}$ come from \eqref{eq:Anm}.
Since the system
\[ \left\{\Vec\monog{k,m}{+},\ \Vec\monog{j,l}{-}|\
 0\leq k\leq n,\ 0\leq m\leq k,\ 1\leq j\leq n,\ 1\leq l\leq j\right\}
\]
is orthogonal, straightforward computations show that
\begin{align}
 \langle\contra{k_1,m_1}{\pm},\,
 \Vec\monog{k_2,m_2}{\mp}\rangle_{[\mu]}
   =& \frac{\pi}{2}\bigg( 2\coefiv{k_1,m_1}(k_1+m_1+1)\,
  \int_{0}^{1/\mu}\int_{0}^{\pi}(\harbaseq{k_1,m_1-1})^2\,du\,dv \nonumber\\
&- \frac{2}{(k_1+m_1+2)^2}\int_{0}^{1/\mu}\,
  \int_{0}^{\pi}(\harbaseq{k_1,m_1+1})^2\,du\,dv \nonumber\\
&\mp \frac{2\delta_{0,m_1}}{(k_1+2)^2}\,
   \int_{0}^{1/\mu}\int_{0}^{\pi}(\harbaseq{k_1,1})^2\,du\,dv
  \bigg)\delta_{m_1,m_2}\delta_{k_1,k_2}.
\end{align}

Furthermore, using the expression \eqref{eq:contracoords} and recalling that
\begin{align*}
  \Vec  \monog{k,m}{-}  = &
  \dfrac{1}{2}\big[\big( (k+m+1)\harbase{k,m-1}{-}
  \frac{1}{k+m+2}\harbase{k,m+1}{-} \big) e_1\\
   & +  \big((k+m+1)\harbase{k,m-1}{+}
   + \frac{1}{k+m+2}\harbase{k,m+1}{+}\big)e_2\big]
\end{align*}
when $m>0$, we obtain that
\begin{align*}
  \langle \contra{k,m}{+},\ \Vec\monog{k,m}{-}\rangle_{[\mu]}
   = &  \frac{1}{2}\bigg[ \int_{\Omega_\mu} \big(
    \coefiv{k,m}\harbase{k,m-1}{-} +
    \frac{1}{k+m+2}\harbase{k,m+1}{-}\big) \\
& \times \big((k+m+1)\harbase{k,m-1}{-}
  -\frac{1}{k+m+2}\harbase{k,m+1}{-} \big) \,dx\\
& + \int_{\Omega_\mu} \big( \coefiv{k,m}\harbase{k,m-1}{+}
 -\frac{1}{k+m+2}\harbase{k,m+1}{+}\big)\\
& \times  \big((k+m+1)\harbase{k,m-1}{+}
   +\frac{1}{k+m+2}\harbase{k,m+1}{+} \big) \,dx \bigg]\\
 = & \coefiv{k,m}\|\harbase{k,m-1}{+}\|^2_{[\mu]}
 -\frac{1}{(k+m+2)^2}\|\harbase{k,m+1}{+}\|^2_{[\mu]}\\
 = & 0
\end{align*}
by \eqref{eq:coefiv}.
Similarly, the orthogonality of $\{\Phi_m^+,\Phi_l^-\}$ gives
$\langle\contra{k,m}{-}, \, \Vec\monog{k,m}{+} \rangle_{[\mu]}=0$.
Next, we expand
\begin{align*}
 \langle \contra{k_1,0}{},\Vec\monog{k_2,m}{\pm}\rangle_{[\mu]}
  = &   \frac{1}{2(k_1+2)} \bigg((k_2+m+1)\int_{\Omega_\mu}
   \harbase{k_1,1}{-}\harbase{k_2,m-1}{\pm} \,dx\\
& - \frac{1}{k_2+m+2}\int_{\Omega_\mu}
       \harbase{k_1,1}{-}\harbase{k_2,m+1}{\pm} \,dx\\
& \pm \big( (k_2+m+1)\int_{\Omega_\mu}
      \harbase{k_1,1}{+}\harbase{k_2,m-1}{\mp} \,dx\\\
& \quad +  \frac{1}{k_2+m+2}\int_{\Omega_\mu}
    \harbase{k_1,1}{+}\harbase{k_2,m+1}{\mp} \,dx  \big) \bigg) \\
& = 0
\end{align*}
again by orthogonality of $\{\Phi_m^+,\Phi_l^-\}$.
For $k_1\neq k_2$, by the orthogonality of the system
\begin{align*}
\bigg\{\harbase{k_1,m_1}{+},\harbase{k_2,m_2}{-}| \ & 0\leq k_1\leq n_1,\
0\leq k_2\leq n_2,\\
 & 0\leq m_1\leq k_1,\ 1\leq m_2\leq k_2,\\
 & n_1,n_2\geq0\bigg\}
\end{align*}
it remains to check that
\begin{align*}
  \langle\contra{k,0}{},\, \Vec\monog{k,2}{-}\rangle_{[\mu]}
 = &\frac{(k+m+1)}{2(k+2)}\big(\int_{\Omega_\mu}
  \left(\harbase{k,1}{-}\right)^2 \,dx-\int_{\Omega_\mu}
  \left(\harbase{k,1}{+}\right)^2 \,dx\big) \\
 = & 0 ;
\end{align*}
the last equality is a consequence of
\begin{align*}
   \int_{\Omega_\mu}(\harbase{k,1}{-})^2 \,dx
  & = \int_{0}^{\pi} \int_{0}^{1/\mu}(\harbaseq{k,1})^2
       \,dv\,du\int_{0}^{2\pi}\sin^2\phi d\phi\\
  & =  \int_{0}^{\pi} \int_{0}^{1/\mu}(\harbaseq{k,1})^2
    \,dv\,du\int_{0}^{2\pi}\cos^2\phi \,d\phi\\
  & =  \int_{\Omega_\mu}(\harbase{k,1}{+})^2 \,dx.
\end{align*}
We have verified that the functions $\contra{k,m}{\pm}$ are contragenic.

It remains to prove the orthogonality of the system
$\{\contra{k_1,m}{\pm},\contra{k_2,0}{}\}$. Using the expression
\eqref{eq:contracoords}, when $1\leq m_1,m_2$ we have
\begin{align*}
\langle\contra{k_1,m_1}{\pm},\contra{k_2,m_2}{\pm}\rangle_{[\mu]}  = &
\coefiv{k_1,m_1}\coefiv{k_2,m_2}\int_{\Omega_\mu}\harbase{k_1,m_1-1}{\mp}\harbase{k_2,m_2-1}{\mp} \,dx\\
& +
\frac{\coefiv{k_1,m_1}}{k_2+m_2+2}\int_{\Omega_\mu}\harbase{k_1,m_1-1}{\mp}\harbase{k_2,m_2+1}{\mp} \,dx\\
& +
\frac{\coefiv{k_2,m_2}}{k_1+m_1+2}\int_{\Omega_\mu}\harbase{k_1,m_1+1}{\mp}\harbase{k_2,m_2-1}{\mp} \,dx\\
& +
\frac{1}{(k_1+m_1+2)(k_2+m_2+2)}\int_{\Omega_\mu}\harbase{k_1,m_1+1}{\mp}\harbase{k_2,m_2+1}{\mp} \,dx \\
& +
\coefiv{k_1,m_1}\coefiv{k_2,m_2}\int_{\Omega_\mu}\harbase{k_1,m_1-1}{\pm}\harbase{k_2,m_2-1}{\pm} \,dx\\
& -
\frac{\coefiv{k_1,m_1}}{k_2+m_2+2}\int_{\Omega_\mu}\harbase{k_1,m_1-1}{\pm}\harbase{k_2,m_2+1}{\pm} \,dx\\
& -
\frac{\coefiv{k_2,m_2}}{k_1+m_1+2}\int_{\Omega_\mu}\harbase{k_1,m_1+1}{\pm}\harbase{k_2,m_2-1}{\pm} \,dx\\
& +
\frac{1}{(k_1+m_1+2)(k_2+m_2+2)}\int_{\Omega_\mu}\harbase{k_1,m_1+1}{\pm}\harbase{k_2,m_2+1}{\pm} \,dx.
\end{align*}
Thus, by a similar argument to that used in the proof of Theorem
\ref{th:monogbase},
\begin{align*}
  \langle\contra{k_1,m_1}{\pm},\contra{k_2,m_2}{\pm}\rangle_{[\mu]}
  = & 2\pi\delta_{m_1,m_2}\delta_{k_1,k_2}\bigg(
  (\coefiv{k_1,m_1})^2\int_{0}^{\pi}\int_{0}^{1/\mu}
   (\harbaseq{k_1,m_1-1})^2dvdu \\
  & +  \big(\frac{1}{k_1+m_1+2}\big)^2\int_{0}^{\pi}\int_{0}^{1/\mu}
    (\harbaseq{k_1,m_1+1})^2 \,dv\,du \bigg).
\end{align*}
On the other hand, when $1\leq m\leq k_1$, we have that
\begin{align*}
  \langle\contra{k_1,0}{},\contra{k_2,m}{\pm}\rangle_{[\mu]}
  = &   \frac{1}{k_1+2} \bigg( \coefiv{k_2,m}\int_{\Omega_\mu}
  \harbase{k_1,1}{-}\harbase{k_2,m-1}{\mp}   \, \,dx \\
  & +  \frac{1}{k_2+m+2}\int_{\Omega_\mu}\harbase{k_1,1}{-}\harbase{k_2,m+1}{\mp}
\, \,dx\\
& \mp  \coefiv{k_2,m}\int_{\Omega_\mu}\harbase{k_1,1}{+}\harbase{k_2,m-1}{\pm}
\, \,dx\\
& \pm
 \frac{1}{(k_2+m+2)}\int_{\Omega_\mu}\harbase{k_1,1}{+}\harbase{k_2,m+1}{\pm}
\, \,dx \bigg).
\end{align*}
Then, it is clear that
$\langle\contra{k_1,0}{},\contra{k_2,m}{-}\rangle_{[\mu]}=0$. It remains to
check that
\[\langle\contra{k_1,0}{},\contra{k_2,m}{+}\rangle_{[\mu]}=0,\]
But this follows again from the formula for the cosine of a sum of angles
and  $\int_{0}^{2\pi}\Phi_m^+d\phi=0$.

Finally, by the orthogonality of the system $\{\Phi_m^\pm\}$,
\[ \langle\contra{k_1,m_1}{\pm},\contra{k_2,m_2}{\mp}\rangle_{[\mu]}=0.
\]
\end{proof}


\subsection{Further observations}

Subspaces analogous to the homogeneous polynomials are obtained by defining
$\Harhat^{(n)}(\Omega)$ to be the orthogonal component of $\Har_*^{(n-1)}(\Omega)$
in $\Har_*^{(n)}(\Omega)$, so we have an orthogonal decomposition
\[    \Har_2(\Omega) = \bigoplus_{n=0}^\infty \Harhat^{(n)}(\Omega).
\]
(For $\Omega=\Omega_0$ this is in fact the decomposition by spherical harmonics.)
Similarly, let $\widehat\N^{(n)}[\mu]$ be the orthogonal component of $\N_*^{(n-1)}[\mu]$
in $\N_*^{(n)}[\mu]$, so  $\N_*^{(n)}[\mu]=\bigoplus_{k=1}^n\widehat\N^{(k)}[\mu]$.
Thus
\begin{align}  \label{eq:decomphar}
  \Harhat^{(n)}(\Omega_\mu) = \widehat\M^{(n)}(\Omega_\mu)
   \oplus \overline{\widehat\M^{(n)}(\Omega_\mu)}
   \oplus \widehat\N^{(n)}[\mu]
\end{align}
where the monogenic part $\widehat\M^{(n)}(\Omega_\mu)$ is defined analogously.
 
Natural linear mappings from the space of spherical harmonics
$\Har^{(n)}(\Omega_0)$ to the spheroidal $\Har^{(n)}(\Omega_\mu)$ were
worked out in \cite{BBS}. These correspondences will be studied in the
context of monogenic functions in future work.

\begin{theo} \label{th:dense}
  The functions $\contra{k,m}{\pm}[\mu]$ span a dense set in
  $\N(\Omega_\mu)$. Therefore the functions
  $\ambi{k,m}{\pm\pm}$,  $\contra{k,m}{\pm}[\mu]$ form an orthogonal
  basis for $\Har_2(\Omega_\mu)$.
\end{theo}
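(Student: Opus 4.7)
The plan is to exploit the dimension identity $\dim(\M_*^{(n)}+\overline{\M}_*^{(n)}) + \dim \N_*^{(n)}[\mu] = (2n(n+3)+3) + n^2 = 3(n+1)^2$, which matches the dimension of the full space of $\R^3$-valued harmonic polynomials of degree at most $n$. This numerical coincidence will force the ambigenic and contragenic polynomials together to span all harmonic polynomials of each degree, and density of harmonic polynomials in $\Har_2(\Omega_\mu)$ will then transfer density to each summand.

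The first step is to observe, by applying Theorem \ref{propgarabedian} coordinate-by-coordinate in $\R^3$, that the $\R^3$-valued harmonic polynomials are dense in $\Har_2(\Omega_\mu)$. The second step is the dimension bookkeeping: Propostion \ref{prop:ambibasis} gives $\dim(\M_*^{(n)}+\overline{\M}_*^{(n)}) = 2n(n+3)+3$ and Theorem \ref{th:contragenicbasis} gives $\dim \N_*^{(n)}[\mu] = n^2$. Since the ambigenic and contragenic polynomials are orthogonal by definition of $\N$, their direct sum has dimension $3(n+1)^2 = \dim \Har_*^{(n)}(\R^3)$; hence every $\R^3$-valued harmonic polynomial of degree at most $n$ decomposes uniquely as $p = p_a + p_c$ with $p_a$ ambigenic and $p_c$ contragenic.

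Third, to prove density of the $\contra{k,m}{\pm}[\mu]$ in $\N(\Omega_\mu)$, pick $h \in \N(\Omega_\mu)$ and $\varepsilon>0$. By step one, choose a harmonic polynomial $p$ with $\|h-p\|_{[\mu]} < \varepsilon$, and split $p = p_a + p_c$ as above. Since $h \in \N(\Omega_\mu)$ is orthogonal to $p_a \in \M_2+\overline{\M}_2$, and $p_c$ is likewise orthogonal to $p_a$, the Pythagorean identity gives
\[
\|h-p\|_{[\mu]}^2 = \|(h-p_c) - p_a\|_{[\mu]}^2 = \|h-p_c\|_{[\mu]}^2 + \|p_a\|_{[\mu]}^2,
\]
so $\|h-p_c\|_{[\mu]} \le \|h-p\|_{[\mu]} < \varepsilon$. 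Since by Theorem \ref{th:contragenicbasis} $p_c$ lies in the span of the $\contra{k,m}{\pm}[\mu]$ and $\contra{k,0}{}[\mu]$, this proves density.

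Finally, combine with Proposition \ref{prop:ambibasis}, which by the same density-plus-dimension argument extends to give a complete orthogonal system in $\M_2(\Omega_\mu)+\overline{\M}_2(\Omega_\mu)$; the orthogonal direct sum with the contragenic basis then yields an orthogonal basis for $\Har_2(\Omega_\mu)$. The only subtle step is the Pythagorean estimate above, which requires exploiting both the contragenicity of $h$ and the orthogonality between $p_a$ and $p_c$; beyond this, the argument is a routine assembly of earlier results.
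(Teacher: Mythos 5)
Your proposal is correct and takes essentially the same route as the paper: both arguments rest on the completeness of the spheroidal harmonic polynomials in $\Har_2(\Omega_\mu)$ together with the splitting of the degree-$\le n$ harmonic polynomials into ambigenic plus contragenic parts, which is exactly what your dimension count $2n(n+3)+3+n^2=3(n+1)^2$ encodes and what the paper packages in the decomposition \eqref{eq:decomphar}. The only difference is presentational: the paper runs the limiting step through the graded orthogonal series $Z=\sum_k U_k$ and the remark that the resulting ambigenic part is simultaneously contragenic, hence zero, while you handle the same step by $\varepsilon$-approximation and the Pythagorean inequality, which in fact makes explicit the orthogonality argument the paper compresses into that remark.
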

\begin{proof}
  Let $Z\in\N(\Omega_\mu)$. Write $Z=\sum_{k=0}^{\infty}U_k$, where
  $U_k\in\Harhat^{(k)}(\Omega_\mu)$, and let $U_k=Y_k+Z_k$ be the
  decomposition into ambigenic and contragenic polynomials. Thus
  $Z=Y+\sum_1^\infty Z_k$ where $Y=\sum_0^\infty Y$ is both ambigenic
  and contragenic, i.e.\ $Y=0$. Hence
  $Z\in \bigoplus\widehat\N^{(k)}[\mu]$ as required.
\end{proof}

The orthogonal decomposition
$(\M_2(\Omega)+\overline{\M}_2(\Omega))\oplus \N(\Omega)$ justifies
the idea of referring to the ``ambigenic part'' or the ``monogenic
part'' of any harmonic function $\Omega\to\R^3$ (the latter being
determined up to an additive monogenic constant). Theorem
\ref{th:dense} provides a method of calculation of this part in the
case of spheroids $\Omega_\mu$, by obtaining the Fourier coefficients
as in any Hilbert space, and then discarding the contragenic and
antimonogenic terms.

As a consequence of the fact that the norm on $L_2(\Omega_\mu)$
depends upon $\mu$, the spaces $\N(\Omega_\mu)$ are distinct for
distinct values of $\mu$. It is easy to see that

\begin{prop} $\mu\not=\mu'$ implies $\N(\Omega_\mu)\not=\N(\Omega_{\mu'})$.
\end{prop}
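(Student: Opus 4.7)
The plan is to reduce to showing inequality of the finite-dimensional spaces $\N_*^{(2)}[\mu]$ and $\N_*^{(2)}[\mu']$ for $\mu\neq\mu'$. By Theorem \ref{th:dense}, polynomials are dense in $\N(\Omega_\mu)$, and the ambient vector space of harmonic polynomials of degree at most $2$ does not depend on the domain, so $\N(\Omega_\mu)=\N(\Omega_{\mu'})$ would imply $\N_*^{(2)}[\mu]=\N_*^{(2)}[\mu']$. It therefore suffices to disprove this polynomial equality.

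By Theorem \ref{th:contragenicbasis}, each $\N_*^{(2)}[\nu]$ has the explicit basis $\{\contra{1,0}{},\,\contra{2,0}{},\,\contra{2,1}{+}[\nu],\,\contra{2,1}{-}[\nu]\}$, of which the first two are independent of $\nu$ (visible from Table \ref{tab:firstcontragenics}). Assuming $\N_*^{(2)}[\mu]=\N_*^{(2)}[\mu']$, we may expand
\[ \contra{2,1}{+}[\mu]=a\contra{1,0}{}+b\contra{2,0}{}+c\contra{2,1}{+}[\mu']+d\contra{2,1}{-}[\mu'] \]
and compare the $e_1$-components. On the left the $e_1$-part is simply $6x_1x_2$, whereas the $e_1$-part of $\contra{2,1}{-}[\mu']$ contains the monomial $x_0^2$ with coefficient $30(1-\mu'^2)^2/D(\mu')$, where $D(\mu)=30-20\mu^2+6\mu^4$; this coefficient is nonzero since $\mu'\in[0,1)\cup i\R^+$ rules out $\mu'^2=1$. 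Matching the monomials $x_0^2$, $x_2$, $x_0x_2$, $x_1x_2$ successively in the $e_1$-component forces $d=a=b=0$ and $c=1$, so that $\contra{2,1}{+}[\mu]=\contra{2,1}{+}[\mu']$ as polynomials in $x_0,x_1,x_2$.

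Finally, equating the $e_2$-components term by term, the coefficients of $x_0^2$ and of the constant term yield
\[ \frac{(1-\mu^2)^2}{D(\mu)}=\frac{(1-\mu'^2)^2}{D(\mu')}\qquad\text{and}\qquad \frac{\mu^2(1-\mu^2)^2}{D(\mu)}=\frac{\mu'^2(1-\mu'^2)^2}{D(\mu')}. \]
Dividing the second relation by the first gives $\mu^2=\mu'^2$; combined with the sign conventions of Section \ref{sec:harmonics} ($\mu\in[0,1)$ prolate, $\mu\in i\R^+$ oblate), this forces $\mu=\mu'$, contradicting $\mu\neq\mu'$. The main obstacle is the bookkeeping in the $e_1$-component expansion of step two, but once one notes that the $e_1$-part of $\contra{2,1}{+}$ consists of the single monomial $6x_1x_2$, the remaining identifications are immediate from the explicit formulas in Table \ref{tab:firstcontragenics}.
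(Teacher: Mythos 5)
Your proof is correct and takes essentially the same route as the paper: both reduce the question to the degree-2 contragenic polynomials and use the explicit $\mu$-dependence of the coefficients in Table \ref{tab:firstcontragenics} to show that such a polynomial can belong to $\N(\Omega_\mu)$ for only one value of $\mu$. Your coefficient-matching argument is simply a more detailed write-up of the paper's brief computation.
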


Indeed,  consider the polynomials of degree 2, which are of the form
\[  a_0\contra{2,0}{}[\mu] + a_+\contra{2,1}{+}[\mu] + a_-\contra{2,1}{-}[\mu]
\]
for real $a_0$, $a_+$, $a_-$. From Table \ref{tab:firstcontragenics}
we see that the coefficients of $x_1x_2$ and $x_2^2$ are,
respectively, $6a_+$ and $(75a_+-105a_-)/(30-20\mu^2+6\mu^4)$. These
coefficients determine $a_+$ and $a_-$, and then $a_0$ is determined
by the coefficient of $x_0x_1$.  Such a polynomial determines the
value of $\mu\in\R^+\cup i\R^+$, and thus can be in only one space
$\N(\Omega_\mu)$.

The fact that the notion of contragenicity depends on the domain
implies that it is not a local property, in contrast to harmonicity
and monogenicity. In particular, any attempt to seek a condition on
the derivatives of a harmonic function to detect whether it is
monogenic or not is doomed to failure.  It is not known, however,
whether such a condition may exist associated to a fixed domain, such
as a sphere or spheroid.


\section{Acknowledgements}

The first author's work is supported by CONACyT Grant 1600594. The second author acknowledges financial support by the Asociaci\'on Mexicana de Cultura, A. C.


\end{document}